\newtheorem{theorem}{Theorem}[section]
\newtheorem{definition}[theorem]{Definition}
\newtheorem{example}[theorem]{Example}
\newtheorem{lemma}[theorem]{Lemma}
\newtheorem{corollary}[theorem]{Corollary}
\theoremstyle{remark}
\newtheorem{remark}[theorem]{Remark}
\numberwithin{equation}{section}
\begin{document}
\title[Bergman spaces, convex functionals, concentration estimates and stability]{Stability of Wehrl-type Functionals and Concentration Estimates on Bergman Spaces of Log-Subharmonic Functions on the Unit Sphere} 
\author{Vladan Jaguzović}
\address{ Faculty of Natural Sciences and Mathematics \newline\indent
	University of Banja Luka\newline\indent
	Mladena Stojanovića 2\newline\indent
	78000 Banja Luka \newline\indent
	Republic of Srpska\newline\indent
	Bosnia and Herzegovina
}
\email{vladan.jaguzovic@pmf.unibl.org}

\author{Petar Melentijevi\'{c}}
\address{Faculty of Mathematics\newline\indent
	University of Belgrade\newline\indent
	Studentski trg 16\newline\indent
	11000 Beograd\newline\indent
	Republic of Serbia}
\email{petar.melentijevic@matf.bg.ac.rs}
\begin{NoHyper} 
	\let\thefootnote\relax
	\footnotetext{MSC2020: Primary: 30H20, 30F15; Secondary: 31C12, 28A78.} 
	\footnotetext{Keywords and phrases: harmonic function; subharmonic function; isoperimetric inequality; Bergman spaces.}
\end{NoHyper}
\begin{abstract}
	In this paper, we consider weighted Bergman spaces $\mathcal{B}_{\alpha,p}$ of log-subharmonic functions on the unit sphere. Using the isoperimetric inequality for the spherical metric we prove certain monotonicity property for super-level sets of $|f(x)|^p\mathcal{W}_n^{\alpha}(x),$ where $f\in \mathcal{B}_{\alpha,p}$ and $\mathcal{W}_n^{\alpha}(x)$ is the Bergman weight. As a consequence, we solve a maximization problem for certain Wehrl-type (convex) functionals and concentration estimates. Moreover, we show the stability of these estimates, proving that near-extremizing values are achieved for near-extremizing functions.
\end{abstract} 
\maketitle \pagestyle{myheadings}
\section{Introduction}

Recently, there has been growing interest for different questions regarding  Wehrl-type entropy conjectures, concentration estimates and/or maximizing certain functionals on Bergman, Fock and spaces of polynomials. In their prominent paper \cite{NicolaTilli2022}, Nicola and Tilli proved the Faber-Krahn inequality for the Short-time Fourier transform via an ingenious method of obtaining differential inequality for super-level sets of certain functions using the isoperimetric inequality. Very quickly, Kulikov (\cite{kulikov2022functionals-with-extrema}) successfully adapted this technique to the hyperbolic setting on the unit disk and solved the conjectures of Lieb-Solovej, Pavlovi\'{c} and Brevig-Ortega-Cerda- Seip- Zhao (\cite{LiebSolovej}, \cite{Pavlovic}, \cite{BrevigOSSeipZhao}). The concentration estimate for the weighted Bergman spaces on the unit disk was proved by Ramos and Tilli (\cite{zbMATH07738115}). Frank (\cite{zbMATH07683016}) and Kulikov-Ortega-Cerda-Nicolla-Tilli (\cite{KOCNT}) independently proved similar results in a more general setting. In particular,they proved  concentration estimates and sharp Wehrl-type bounds in two dimensions for ambient spaces with a constant curvature metric. While such extensions are still not available for the higher-dimensional holomorphic counterpart, Kalaj (\cite{kalaj2024}) constructed a family of Bergman weights such that the isoperimetric inequality for the hyperbolic metric in several dimensions can be used in the analysis of the appropriate super-level sets. In \cite{kalaj2025faber}, Kalaj and Ramos proved the concentration estimates for these function spaces.\\

Quantitative results in this topic appeared soon after. The stability of concentration estimates for the Short-time Fourier transform was proved in \cite{GomesAndreRamosTilli}, while the appropriate Bergman space counterpart was the subject of the paper \cite{GomezKalajMelentijevicRamos2024}. Frank, Nicolla and Tilli proved the quantitative estimates for generalized Wehrl entropies for the Short-time Fourier transform \cite{arXiv:2307.14089}. Melentijevi\'{c}(\cite{melentijevic2025}) gave a proof of the analogous estimates for the weighted Bergman spaces (or wavelet transform) using the hyperbolic adaption of their method. The stability of both localization and general Wehrl-type entropy bounds in the context of spaces of one-dimensional complex polynomials was given in \cite{zbMATH08030975} by Garcia-Ferrero and Ortega-Cerda. Finally, in \cite{arXiv:2412.10940} Nicola, Riccardi and Tilli gave a breakthrough solution to the problem of the uniqueness of extremizers for the Wehrl entropy bounds for symmetric $SU(N)$ and formulate two general principles for extremizers.\\

In this paper, motivated by the results of \cite{kalaj2024} and \cite{kalaj2025faber}, we will consider certain Bergman spaces on $\mathbb{R}^n$ with respect to the spherical metric (or on the unit sphere). These spaces for $n=2$ are slightly larger than spaces of polynomials from \cite{zbMATH07683016} or \cite{zbMATH08030975}, while for $n>2$ they are completely new. For such spaces, problems of the quantitative concentration and generalized Wehrl-type entropy estimates will be discussed.\\

Let \(\mathbb{S}^{n}=\{ \xi \in \mathbb{R}^{n+1}\mid \left\lVert \xi \right\rVert _2 = 1\}\) and let \(\dot{\mathbb{R}}^{n}=\mathbb{R}^{n}\cup \{\infty\}\) with the one-point compactification topology. The stereographic projection \(S: \dot{\mathbb{R}}^{n}\to \mathbb{S}^{n}\) is given by
\[
	\xi_k=\frac{2x_k}{1+|x|^2} \text{ for } k = 1,\ldots,n \text{ and } \xi_{n+1}=\frac{1-|x|^2}{1+|x|^2}.
\]
The inverse of \(S\) is given by
\[
	x_k= \frac{\xi_k}{1+\xi_{n+1}}, k=1,\ldots,n.
\]
In the rest of the paper we use \(x,y\) for the points in \(\mathbb{R}^{n},\) and \(\xi, \eta\) for the corresponding points (under stereographic projection) on the unit sphere \(\mathbb{S}^{n}.\)

The unit sphere \(\mathbb{S}^{n}\) is a hypersurface of \(\mathbb{R}^{n+1},\) hence, \(\mathbb{S}^{n}\) inherence Riemannian structure from \(\mathbb{R}^{n+1}.\)
The standard metric on \(\mathbb{S}^{n}\) and the volume form are expressed in terms of the stereographic coordinates  as
$$g_{ij}= \left( \frac{2}{1+|x|^2} \right) ^{2}\delta_{ij} \quad \text{and} \quad dm_S(x)= \left( \frac{2}{1+|x|^2} \right) ^{n}\, dx.$$
On every oriented Riemaniann surface the Laplace--Beltrami \footnote{commonly called the Laplace operator} operator is defined by \(\Delta u= \mathrm{div}\left( \mathrm{grad} \,u \right).\) The Laplace operator on the unit sphere \(\mathbb{S}^{n} \) expressed in terms of the stereographic coordinates is given by
\begin{align*}
	\Delta_Su(x) 
	= & \left( \frac{1+\left| x \right| ^2}{2} \right) ^{2}\Delta u + \left( 2-n \right) \frac{1+\left| x \right| ^2}{2}\sum_{k=1}^{n} x_k \frac{\partial u}{\partial x_k}.
\end{align*}
\begin{remark}
	One can show that for \(u : \mathbb{S}^{n}\to \mathbb{R}\) and \(\tilde{u}= u \left( \frac{x}{\left\lVert x \right\rVert } \right) ,x \neq 0\) we have
	\(\Delta_Su= \left.\left( \Delta \tilde{u} \right) \right|_{\mathbb{S}^{n}}.\)
\end{remark}

We define the Bergman weight function \(\mathcal{W}_n : \mathbb{R}^{n} \to \mathbb{R}\) as the solution of equation \(\Delta_S  \log \mathcal{W}_n = -1,\).

\begin{definition}
	For \(0<p<\infty\) and \(\alpha>0\) we say that a complex valued continuous function \(f: \mathbb{R}^{n}\to \mathbb{C}\) belongs to the Bergman-type space \(B_{\alpha,p}\) if
	\[
		\left\lVert f \right\rVert _{\alpha,p}^{p}=\frac{1}{c(\alpha)}\int_{\mathbb{R}^{n}}^{}\left| f(x) \right| ^{p}\mathcal{W}_n^{\alpha}(x)\,  dm_S(x)<\infty,
	\]
	where 
	\(c(\alpha)= 2^n\int_{\mathbb{R}^{n}}^{}\mathcal{W}_n^{\alpha}(x)\, \frac{dx}{\left( 1+x^2 \right) ^{n}}. \)
\end{definition}
For every \(\xi \in \mathbb{S}^{n}\) there exists isometry of the unit sphere \(\psi_\xi : \mathbb{S}^{n}\to \mathbb{S}^{n}\) such that \(\psi_\xi(e_{n+1})=\xi\) and \(\psi_\xi \circ \psi_\xi = \mathrm{Id}_{\mathbb{S}^{n}}\) (see \nameref{sec:appendix}). We denote \(\varphi_{x_0}= S^{-1} \circ \psi_{S(x_0)}\circ S.\) Notice that \(\varphi_{x_0 }(0)=x_0\) and \(\varphi_{x_0 } \circ \varphi_{x_0 }=\mathrm{Id}_{\dot{\mathbb{R}}^{n}}.\)
\begin{definition}
	Let \(U \subset \mathbb{R}^{n}\) be an open set. For an upper semicontinuous function \(f : U \to \mathbb{R},\) we say that \(f\) is subharmonic with respect to \(\Delta_S\) (or just \(\Delta_S\)-subharmonic) for every \(x_0 \in U\) there exists \(R >0\) such that
	\begin{equation}\label{eq:the-submean-value-inequality}
		f(x_0 ) \leq   \int_{\mathbb{S}^{n-1}}^{}f\left( \varphi_{x_0 }\left( r \zeta \right)  \right) \, d \sigma_{n-1}(\zeta)
	\end{equation}
	all \(0<r<R.\) A function \(f: U \to [0,\infty)\) is called a log-subharmonic with respect to \(\Delta_S\) if the function \(\log f\) is \(\Delta_S\)-subharmonic on \(U\setminus f^{-1}(0).\)
\end{definition}
\begin{remark}
	A function \(f \in C^{2}(U)\) is subharmonic iff \(\Delta_S f \geq 0\) which is proved in Section \ref{subsec:mean-value property}.
\end{remark}
\begin{example}
	The function
	\[
		\begin{split}
			F_m (x)=\exp\left\{\frac{4c_m}{n}  \int_{0}^{|x|} \tau  \left(\tau ^2+1\right)^{n-2} \, _2F_1\left(\frac{n}{2},n+m;\frac{n}{2}+1;-\tau ^2\right) \, d\tau \right\} \\
			= \exp\left\{\frac{4c_m}{n}  \int_{0}^{|x|}4 \tau  \left(\tau ^2+1\right)^{\frac{n}{2}-2} \, _2F_1\left(\frac{n}{2},1-m-\frac{n}{2};\frac{n}{2}+1;\frac{\tau ^2}{\tau ^2+1}\right) \, d\tau \right\}
		\end{split}
	\]
	is a solution of equation \(\Delta_S \log |F_m|=\frac{c_m }{\left( 1+|x|^2 \right) ^{m}}.\)
	Also, notice that for \(c_m \geq 0\) we have
	\[
		F_m(x) \leq \exp \{\frac{4c_m}{n}\int_{0}^{|x|}\tau \left( 1+\tau ^2 \right) ^{\frac{n}{2}-2} \, d\tau\} \leq \exp \{4c_m \frac{\left( 1+r^2 \right) ^{\frac{n-2}{2}}-1}{n\left( n-2 \right) }\}.
	\]

\end{example}
\begin{remark}
	The functions given in the previous example show that the problem we are solving is meaningful for \(n \geq 3.\) Notice that those functions are mutually different since
	\[
		\Delta_S \log \left( F_1 F_2\dots F_m \right) = \sum_{k=1}^{m} \frac{c_k}{(1+|x|^2)^{k}}.
	\]
	One can see that \(F_1 F_2 \cdot \ldots \cdot F_m \in B_{\alpha,p}\) as long as \(p\left( c_1 +c_2 +\dots +c_m \right)  <   \alpha \frac{\Gamma\left( \frac{n}{2} \right) \Gamma\left( \frac{n}{2}+1 \right) }{\Gamma\left( n \right) }.\)
	Also, in that case we have  \(\limsup_{|x| \to \infty} \left| (F_1 F_2 \cdot \ldots \cdot F_m)(x) \right| ^{p}\mathcal{W}_n^{\alpha}(x)=0.\)
\end{remark}
\begin{remark}\label{rm:ortega-serda}
	For \(n=2\) we have  \(\Delta_S u(x)=\left( \frac{1+|x|^2}{2} \right) ^2 \Delta u,\) therefore, harmonicity and subharmonicity with respect to \(\Delta_S\) is equivalent to standard harmonicity and subharmonicity. Moreover, the Bergman weight function in this case is \(\frac{1}{\left( 1+|z|^2 \right) },\) and hence every holomorphic polynomial degree \(< \frac{2\alpha +2}{p}\) belongs to \(B_{\alpha,p}.\) Also, the modulus of every holomorphic polynomial is  a \(\Delta_S\)-logsubharmonic function.
\end{remark}
\begin{remark}
	For the sake of brevity, we will use \(dm_S(x)=\frac{2^{n}}{\left( 1+x^2 \right) ^{n}}dx\) and \(m_S(A)=\int_{A}^{}\, dm_S(x). \)
\end{remark}
\begin{definition}
	Denote by \(\mathcal{B}_{\alpha,p} \)  the set of real-analytic functions \(f \in B_{\alpha,p},\)  such that   \(|f|\) is \(\Delta_S\)-log-subharmonic and  \(\limsup_{|x| \to \infty} \left| f(x) \right| ^{p}\mathcal{W}_n^{\alpha}(x)<\infty.\)
\end{definition}
Now we can formulate one of our main theorems. Our goal is to maximize the functional:
\[
	C_\Omega \left( f \right) = \frac{\frac{1}{c(\alpha)}\int_{\Omega}^{}|f(x)|^{p}\mathcal{W}_n^{\alpha}(x)\, dm_S(x) }{\left\lVert f \right\rVert _{\alpha,p}^{p}}
\]
for all  \(\Omega\subset \mathbb{R}^{n}\) such that \(m_S(\Omega)=s\) and \(f \in \mathcal{B}_{\alpha,p}.\)
\begin{theorem}\label{tm:the-faber-krahn-inequalities-theorem}
	Let \(\alpha >0\) and \(p  >0 \) be fixed. For every \(f \in \mathcal{B}_{\alpha,p}, \) the concentration operator satisfies the inequality
	\[
		C_{\Omega}(f) \leq  C_{\mathbb{B}}\left( \mathbf{1} \right),
	\]
	where \(\mathbb{B}\) is the ball centered at the origin such that \(m_S(\mathbb{B})=m_S(\Omega).\)
\end{theorem}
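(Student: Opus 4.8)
The plan is to adapt the Nicola--Tilli / Kulikov argument (\cite{NicolaTilli2022}, \cite{kulikov2022functionals-with-extrema}) to the round sphere. Via stereographic projection, regard $u := |f|^{p}\,\mathcal{W}_n^{\alpha}$ as a function on $\mathbb{S}^{n}$, so that $dm_S$ is the round volume and $\Delta_S$, $\nabla_S$ denote the Riemannian Laplacian and gradient. Since $|f|$ is $\Delta_S$-log-subharmonic we have $\Delta_S\log|f|\ge 0$ off the zero set $\{f=0\}$, which is real-analytic and hence $m_S$-null; because $\Delta_S\log\mathcal{W}_n=-1$, on $\{u>0\}=\{f\neq 0\}$ this gives the pointwise bound
\[
\Delta_S\log u \;=\; p\,\Delta_S\log|f|\;-\;\alpha \;\ge\; -\alpha .
\]
Write $A_t=\{u>t\}$ and $\mu(t)=m_S(A_t)$; since $u$ is real-analytic and non-constant, $\mu$ is continuous and strictly decreasing from $m_S(\mathbb{S}^n)=:M$ down to $0$, and $A_t$ avoids $\{f=0\}$ for every $t>0$, so $u$ and $\log u$ are genuinely smooth on $\overline{A_t}$ — this is what renders the zeros of $f$ harmless. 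By the bathtub principle, for $0<s=m_S(\Omega)<M$ and $t=t(s)$ the unique level with $\mu(t)=s$, one has $\int_{\Omega}u\,dm_S\le\int_{A_t}u\,dm_S=:\widetilde\rho(s)$ (the case $s=M$ is trivial), so it suffices to compare $\widetilde\rho$ with the analogous quantity $\widetilde\rho^{*}$ for the choice $f\equiv\mathbf{1}$, i.e. $u^{*}=\mathcal{W}_n^{\alpha}$, whose super-level sets are precisely the origin-centred balls $\mathbb{B}$ (here we use that the radial solution $\mathcal{W}_n$ of $\Delta_S\log\mathcal{W}_n=-1$ is radially non-increasing, as in the case $n=2$ where $\mathcal{W}_2(x)=(1+|x|^2)^{-1}$).

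The heart of the proof is a differential inequality for $t(s)$. Fix a regular value $t$ (almost every $t$, by Sard's theorem) and put $w=\log u$. The divergence theorem together with the bound above gives
\[
\int_{\partial A_t}|\nabla_S w|\,d\mathcal{H}^{n-1}_S \;=\; -\int_{A_t}\Delta_S w\,dm_S \;\le\; \alpha\,\mu(t),
\]
while Cauchy--Schwarz yields $\mathcal{H}^{n-1}_S(\partial A_t)^2\le \alpha\,\mu(t)\int_{\partial A_t}|\nabla_S w|^{-1}\,d\mathcal{H}^{n-1}_S$. On $\partial A_t$ one has $|\nabla_S w|=|\nabla_S u|/t$, so the co-area formula gives $\int_{\partial A_t}|\nabla_S w|^{-1}=-t\,\mu'(t)$; combining with the isoperimetric inequality on $\mathbb{S}^{n}$, namely $\mathcal{H}^{n-1}_S(\partial A_t)\ge P(\mu(t))$ where $P(v)$ is the perimeter of a geodesic cap of volume $v$, we obtain $P(\mu(t))^2 \le -\alpha\, t\,\mu(t)\,\mu'(t)$. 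Passing to the inverse function $t=t(s)$ of $s=\mu(t)$ turns this into $-\big(\log t(s)\big)'\le \alpha s/P(s)^{2}$. For $u^{*}=\mathcal{W}_n^{\alpha}$ all three inequalities above are equalities — $\Delta_S\log u^{*}\equiv-\alpha$, and $u^{*}$ is radial, so $|\nabla_S w^{*}|$ is constant on each (geodesic ball) level set and the isoperimetric inequality is saturated — hence $-\big(\log t^{*}(s)\big)'=\alpha s/P(s)^{2}$. Subtracting gives $\big(\log(t(s)/t^{*}(s))\big)'\ge 0$: the ratio $s\mapsto t(s)/t^{*}(s)$ is non-decreasing on $(0,M)$.

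To conclude, differentiate $\widetilde\rho(s)=s\,t(s)+\int_{t(s)}^{\infty}\mu$ (equivalently, use the co-area formula) to get $\widetilde\rho\,'(s)=t(s)$, and likewise $\widetilde\rho^{*\prime}(s)=t^{*}(s)$, with $\widetilde\rho(0)=\widetilde\rho^{*}(0)=0$. Since $t/t^{*}$ is non-decreasing and $t^{*}>0$, the elementary inequality
\[
\frac{\int_{0}^{s}t}{\int_{0}^{M}t}\;\le\;\frac{\int_{0}^{s}t^{*}}{\int_{0}^{M}t^{*}}\qquad(0<s<M)
\]
— proved by splitting $\int_0^M=\int_0^s+\int_s^M$ and bounding $t=t^{*}\cdot(t/t^{*})$ above on $[0,s]$ and below on $[s,M]$ by the value of $t/t^{*}$ at $s$ — gives $\widetilde\rho(s)/\widetilde\rho(M)\le\widetilde\rho^{*}(s)/\widetilde\rho^{*}(M)$. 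Dividing numerator and denominator by $c(\alpha)$ and recalling the bathtub reduction, this is exactly $C_{\Omega}(f)\le C_{\mathbb{B}}(\mathbf{1})$. The step I expect to be the main obstacle is the rigour of the two measure-theoretic manipulations — justifying the divergence theorem and the co-area formula at non-regular levels (via Sard) and controlling the possible singular part of the monotone function $\mu$ so that the a.e.\ differential inequality may legitimately be integrated — which should be handled as in \cite{NicolaTilli2022} and \cite{kulikov2022functionals-with-extrema}; in addition, the auxiliary fact that $\mathcal{W}_n$ is radially non-increasing must be pinned down so that $\mathbb{B}$ genuinely occurs as a super-level set of $u^{*}$.
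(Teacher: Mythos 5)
Your argument is essentially the paper's: both deduce the key differential inequality from the $\Delta_S$-log-subharmonicity bound $\Delta_S\log u\ge-\alpha$, divergence theorem, Cauchy--Schwarz, coarea, and the spherical isoperimetric inequality (this is exactly the paper's Theorem \ref{tm:the-monotonicity-theorem}, re-derived inline), and both close by a comparison of level structures after the bathtub reduction. The only cosmetic difference is that you phrase the final comparison in the rearrangement variable \(s\): you show \(s\mapsto u^*(s)/v^*(s)\) is non-decreasing and then apply an elementary ratio-of-integrals inequality, whereas the paper works in the level variable \(t\), introduces \(\varPsi(t)=t\rho(t)+\int_t^T\rho-\beta(t)\rho_0(\beta(t))-\int_{\beta(t)}^1\rho_0\) with \(\varPsi'(t)=\rho'(t)(t-\beta(t))\), and shows \(\rho-\rho_0\) changes sign exactly once so that \(\varPsi\) decreases then increases with \(\varPsi(0)=\varPsi(T)=0\). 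These are the same fact in two parametrizations, so I would not call the route genuinely different.

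There is one genuine gap. You tacitly assume that \(u=|f|^p\mathcal{W}_n^\alpha\) descends to a continuous function on \(\mathbb{S}^n\) (so that \(\mu(0+)=m_S(\mathbb{S}^n)\), all super-level sets are compact, and the divergence/coarea machinery applies globally). That is fine when \(\lim_{|x|\to\infty}u(x)=0\), but membership in \(\mathcal{B}_{\alpha,p}\) only guarantees \(\limsup_{|x|\to\infty}u(x)<\infty\); the \(\limsup\) may be positive, in which case \(u\) need not extend continuously to the north pole and \(\{u>t\}\) may fail to be a nice precompact subset of \(\mathbb{R}^n\). The paper handles this by first proving the result under the stronger decay hypothesis (Lemma \ref{lm:the-faber-krahn-inequalities-lemma}), and then, for the general case, replacing \(\alpha\) by \(\alpha+\epsilon\) — which forces \(\lim_{|x|\to\infty}|f|^p\mathcal{W}_n^{\alpha+\epsilon}=0\) — applying the decay case, and letting \(\epsilon\to0^+\) via monotone convergence (using that \(\mathcal{W}_n<1\)). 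You should add this approximation step; without it, your argument only proves the restricted statement.
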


\begin{theorem}\label{tm:convex-functional-theorem}
	Let \(p >0\) and \(\alpha  >0\) and let \(G: [0,\infty) \to \mathbb{R}\) be a convex function.
	Then the maximum value of
	\[
		\int_{\mathbb{R}^{n}}^{}G\left( \left| f(x) \right| ^{p}\mathcal{W}_n^{\alpha}(x) \right) \, dm_S(x)
	\]
	is achieved for \(f(x)\equiv 1,\) subject to the condition that \(f \in \mathcal{B}_{\alpha,p}\) and \(\left\lVert f \right\rVert _{\alpha,p}=1.\)
\end{theorem}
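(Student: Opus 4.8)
The plan is to deduce Theorem~\ref{tm:convex-functional-theorem} from Theorem~\ref{tm:the-faber-krahn-inequalities-theorem} by combining the layer-cake representation of convex functions with a majorization argument. Write $u_f(x):=|f(x)|^{p}\mathcal{W}_n^{\alpha}(x)$ and $u_{\mathbf{1}}(x):=\mathcal{W}_n^{\alpha}(x)$, so that the functional is $\int_{\mathbb{R}^{n}}G(u_f)\,dm_S$. I would first record the normalizations that make $\mathbf{1}$ an admissible competitor: from the definition of $c(\alpha)$ one has $\|\mathbf{1}\|_{\alpha,p}^{p}=\tfrac{1}{c(\alpha)}\int_{\mathbb{R}^{n}}\mathcal{W}_n^{\alpha}\,dm_S=1$, and the growth condition reduces to $\mathcal{W}_n\to0$ at infinity, which follows from $\Delta_S\log\mathcal{W}_n=-1$ together with the rotational symmetry of $\mathcal{W}_n$; hence $\mathbf{1}\in\mathcal{B}_{\alpha,p}$ with $\|\mathbf{1}\|_{\alpha,p}=1$. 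The key point is that every admissible $f$ with $\|f\|_{\alpha,p}=1$ satisfies $\int_{\mathbb{R}^{n}}u_f\,dm_S=c(\alpha)=\int_{\mathbb{R}^{n}}u_{\mathbf{1}}\,dm_S$, so $u_f$ and $u_{\mathbf{1}}$ are densities (with respect to $dm_S$) of the same total mass, and Theorem~\ref{tm:the-faber-krahn-inequalities-theorem} expresses that $u_{\mathbf{1}}$ is the most concentrated among them.

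The crucial step will be the inequality
\[
\int_{\mathbb{R}^{n}}(u_f-t)_{+}\,dm_S\ \le\ \int_{\mathbb{R}^{n}}(u_{\mathbf{1}}-t)_{+}\,dm_S\qquad\text{for every }t\ge0 .
\]
To obtain it, apply Theorem~\ref{tm:the-faber-krahn-inequalities-theorem} to the super-level set $\Omega_t:=\{x\in\mathbb{R}^{n}:u_f(x)>t\}$: with $s_t:=m_S(\Omega_t)$ and $\mathbb{B}_t$ the ball centered at the origin with $m_S(\mathbb{B}_t)=s_t$, the theorem (using $\|f\|_{\alpha,p}=1$) gives $\int_{\Omega_t}u_f\,dm_S\le\int_{\mathbb{B}_t}u_{\mathbf{1}}\,dm_S$. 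Subtracting $t\,s_t=t\,m_S(\Omega_t)=t\,m_S(\mathbb{B}_t)$ from both sides yields $\int_{\mathbb{R}^{n}}(u_f-t)_{+}\,dm_S=\int_{\Omega_t}(u_f-t)\,dm_S\le\int_{\mathbb{B}_t}(u_{\mathbf{1}}-t)\,dm_S$, and the last quantity is $\le\int_{\{u_{\mathbf{1}}>t\}}(u_{\mathbf{1}}-t)\,dm_S=\int_{\mathbb{R}^{n}}(u_{\mathbf{1}}-t)_{+}\,dm_S$, simply because $(u_{\mathbf{1}}-t)\mathbf{1}_{E}\le(u_{\mathbf{1}}-t)_{+}$ pointwise for any measurable $E$. (Equivalently: Theorem~\ref{tm:the-faber-krahn-inequalities-theorem} says that the decreasing rearrangement of $u_f$ with respect to $m_S$ is majorized by that of $u_{\mathbf{1}}$ on $[0,m_S(\mathbb{R}^{n})]$, the two having equal integral, and the display is one instance of this majorization.)

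To conclude I would invoke the integral representation of convex functions. When $G'_{+}(0)$ is finite there are $b\in\mathbb{R}$ and a nonnegative Borel measure $\mu$ on $(0,\infty)$ (the distributional second derivative of $G$) with $G(u)=G(0)+bu+\int_{(0,\infty)}(u-t)_{+}\,d\mu(t)$ for all $u\ge0$. Substituting $u=u_f(x)$, integrating $dm_S$, and interchanging the order of integration by Tonelli's theorem (the inner integrand is nonnegative) gives
\[
\int_{\mathbb{R}^{n}}G(u_f)\,dm_S = G(0)\,m_S(\mathbb{R}^{n}) + b\int_{\mathbb{R}^{n}}u_f\,dm_S + \int_{(0,\infty)}\Big(\int_{\mathbb{R}^{n}}(u_f-t)_{+}\,dm_S\Big)\,d\mu(t).
\]
The first two terms equal $G(0)\,m_S(\mathbb{R}^{n})$ and $b\,c(\alpha)$, hence are the same for $f$ and for $\mathbf{1}$; the third is, by the crucial step and $\mu\ge0$, bounded above by its $u_{\mathbf{1}}$-counterpart. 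Adding up, $\int_{\mathbb{R}^{n}}G(u_f)\,dm_S\le\int_{\mathbb{R}^{n}}G(u_{\mathbf{1}})\,dm_S$, with equality for $f\equiv1$, regardless of whether the integrals are finite. For fully general convex $G$ (for instance when $G'_{+}(0)=-\infty$) one uses instead a representation by both upward wedges $(u-t)_{+}$ and downward wedges $(t-u)_{+}$; the comparison $\int_{\mathbb{R}^{n}}(t-u_f)_{+}\,dm_S\le\int_{\mathbb{R}^{n}}(t-u_{\mathbf{1}})_{+}\,dm_S$ needed for the latter follows from the crucial step via the identity $(t-u)_{+}=(u-t)_{+}-u+t$ together with $\int u_f\,dm_S=\int u_{\mathbf{1}}\,dm_S$.

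I do not expect a real obstacle: all the analytic substance sits in Theorem~\ref{tm:the-faber-krahn-inequalities-theorem}, which is assumed. What remains is bookkeeping — the integral representation of a general convex $G$ (the requisite local integrability of the representing measures follows from the finiteness of $G$ at $0$ and at an interior point), the justification of the Tonelli interchange, and the verification that $\mathbf{1}$ is admissible with unit norm — none of which should cause difficulty.
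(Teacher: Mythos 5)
Your proposal reproduces the paper's argument almost verbatim: the decisive step in both is to apply Theorem~\ref{tm:the-faber-krahn-inequalities-theorem} to the super-level set $\{u_f>t\}$, subtract $t\,m_S(\{u_f>t\})$, and enlarge to $(\mathcal{W}_n^{\alpha}-t)_+$ to obtain $\int(u_f-t)_+\,dm_S\le\int(\mathcal{W}_n^{\alpha}-t)_+\,dm_S$ for all $t\ge0$, after which the layer-cake representation $G(u)=G(0)+G'(0)u+\int_0^\infty(u-t)_+\,dG'(t)$ and Tonelli give the result for convex $G$ with finite $G'_+(0)$. The only point where you diverge is in treating $G'_+(0)=-\infty$: the paper truncates via $G_\epsilon(u)=\max\{G(u),G(0)-u/\epsilon\}$ and passes to the limit by monotone convergence, whereas you anchor the tangent line at an interior point and decompose with both upward wedges $(u-t)_+$ and downward wedges $(t-u)_+$; your downward-wedge comparison $\int(t-u_f)_+\,dm_S\le\int(t-\mathcal{W}_n^{\alpha})_+\,dm_S$ does follow from the upward one plus the equal-mass identity $\int u_f\,dm_S=\int\mathcal{W}_n^{\alpha}\,dm_S=c(\alpha)$, so this variant is correct and, if anything, slightly more self-contained since it avoids an approximation argument. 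Both routes use the same Faber--Krahn input and the same normalization observation that $\mathbf{1}\in\mathcal{B}_{\alpha,p}$ with $\|\mathbf{1}\|_{\alpha,p}=1$ (so the linear terms cancel), and neither buys anything the other lacks.
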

\begin{corollary}\label{popravka1}
	For all \(0<p<q<\infty\) and \(0< \alpha < \beta<\infty\) with \(\frac{p}{\alpha}=\frac{q}{\beta}\) we have
	\[
		\left\lVert f \right\rVert _{\beta,q}\leq \left\lVert f \right\rVert _{\alpha,p}
	\]
	for every \(f \in \mathcal{B}_{\alpha,p}.\)
\end{corollary}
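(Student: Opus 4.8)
The plan is to read the corollary off Theorem \ref{tm:convex-functional-theorem} applied to the power function $G(x)=x^{q/p}$. Put $t:=p/\alpha=q/\beta$; from $p/\alpha=q/\beta$ we get $q\alpha/p=\beta$, so for every continuous $f$ and every $x$ one has the pointwise identity
\[
\bigl(|f(x)|^{p}\mathcal{W}_n^{\alpha}(x)\bigr)^{q/p}=|f(x)|^{q}\mathcal{W}_n^{q\alpha/p}(x)=|f(x)|^{q}\mathcal{W}_n^{\beta}(x).
\]
Next I would reduce to the normalized case $\|f\|_{\alpha,p}=1$. This is legitimate because $\mathcal{B}_{\alpha,p}$ is closed under multiplication by positive scalars — adding the constant $\log\lambda$ to $\log|f|$ preserves $\Delta_S$-subharmonicity, while real-analyticity and the condition $\limsup_{|x|\to\infty}|f|^{p}\mathcal{W}_n^{\alpha}<\infty$ are obviously unaffected — and both sides of the claimed inequality are positively homogeneous of degree $1$ in $f$; the case $f\equiv 0$ is trivial.

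Now observe that $G(x)=x^{q/p}$ is convex on $[0,\infty)$ since $q/p>1$. Applying Theorem \ref{tm:convex-functional-theorem} with this $G$ to a normalized $f\in\mathcal{B}_{\alpha,p}$ with $\|f\|_{\alpha,p}=1$ yields
\[
\int_{\mathbb{R}^{n}}\bigl(|f(x)|^{p}\mathcal{W}_n^{\alpha}(x)\bigr)^{q/p}\,dm_S(x)\le\int_{\mathbb{R}^{n}}\bigl(\mathcal{W}_n^{\alpha}(x)\bigr)^{q/p}\,dm_S(x)=\int_{\mathbb{R}^{n}}\mathcal{W}_n^{\beta}(x)\,dm_S(x)=c(\beta),
\]
where the last equality is just the definition of $c(\beta)$ combined with $dm_S(x)=\tfrac{2^{n}}{(1+|x|^{2})^{n}}\,dx$ (equivalently, $\|\mathbf{1}\|_{\beta,q}=1$). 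By the pointwise identity above, the left-hand side equals $\int_{\mathbb{R}^{n}}|f|^{q}\mathcal{W}_n^{\beta}\,dm_S=c(\beta)\,\|f\|_{\beta,q}^{q}$; in particular it is finite, so $f\in B_{\beta,q}$ and the expression $\|f\|_{\beta,q}$ is meaningful. Dividing by $c(\beta)$ gives $\|f\|_{\beta,q}^{q}\le 1=\|f\|_{\alpha,p}^{q}$, and undoing the normalization gives $\|f\|_{\beta,q}\le\|f\|_{\alpha,p}$ for arbitrary $f\in\mathcal{B}_{\alpha,p}$.

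There is no real obstacle here: the only two points that need a word of justification are the scaling invariance of $\mathcal{B}_{\alpha,p}$ (used in the normalization step) and the convexity of $x\mapsto x^{q/p}$ for $q>p$; everything else is a direct substitution into Theorem \ref{tm:convex-functional-theorem}, so the corollary is a clean specialization of it.
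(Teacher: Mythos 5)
Your proof is correct and takes exactly the route the paper does: apply Theorem \ref{tm:convex-functional-theorem} with $G(t)=t^{q/p}$ (the paper writes $G(t)=t^{s}$, $s>1$) and identify $(|f|^{p}\mathcal{W}_n^{\alpha})^{q/p}=|f|^{q}\mathcal{W}_n^{\beta}$. The paper leaves the normalization and the bookkeeping with $c(\beta)$ implicit; you have simply spelled those routine steps out.
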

\begin{proof}
	We get the corollary taking \(G(t)=t ^{s},s >1\) in Theorem \ref{tm:convex-functional-theorem}.
\end{proof}
In the next two theorems we state the stability versions of the previous theorems. First, let us denote
\[
	\delta \left( f; \Omega,\alpha \right) = 1-\frac{C_{\Omega}(f)}{C_{\mathbb{B}}(\mathbf{1})},
\]
where \(\mathbb{B}\) is the ball centered at the origin such that \(m_S(\mathbb{B})=m_S(\Omega)\) and  \(I_{x_0}(x)= \frac{\left( \mathcal{W}_n \circ \varphi_{x_0 } \right) ^{\alpha/p}(x)}{\mathcal{W}_n^{\alpha/p}(x)}\) Notice that \(C_{\mathbb{B}}(\mathbf{1})=C_{\varphi_{x_0 }(\mathbb{B})}(I_{x_0 }).\)
\begin{theorem}\label{tm:stability-of-concentration-operator}
	Let  \(\alpha >0\) and \(f \in \mathcal{B}_{\alpha,2}\) be such that \(\left\lVert f \right\rVert _{\alpha,2}=1.\) There exist \(C = C\left( m_S(\Omega),n, \alpha \right) \) independent of \(f\) such that
	\[
		\inf_{x_0 \in \mathbb{R}^{n}}\left\lVert |f|-I_{x_0 } \right\rVert_{\alpha,2} \leq  C \delta\left( f; \Omega,\alpha \right) ^{\frac{2}{n+2}}.
	\]
\end{theorem}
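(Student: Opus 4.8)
The plan is to follow the quantitative strategy developed for Faber--Krahn-type inequalities in \cite{GomesAndreRamosTilli}, \cite{GomezKalajMelentijevicRamos2024} and \cite{melentijevic2025}: revisit the proof of Theorem~\ref{tm:the-faber-krahn-inequalities-theorem}, keep track of every inequality that is discarded there, show that the deficit $\delta(f;\Omega,\alpha)$ bounds each of these remainders from above, and then convert the resulting information into $L^{2}$-closeness of $|f|$ to a translate $I_{x_{0}}$. The case $p=2$ is what makes this last conversion clean, since $\bigl\|\,|f|-I_{x_{0}}\,\bigr\|_{\alpha,2}^{2}$ is controlled by a single $L^{1}$ distance between $u=|f|^{2}\mathcal{W}_n^{\alpha}$ and $u_{x_{0}}:=|I_{x_{0}}|^{2}\mathcal{W}_n^{\alpha}=(\mathcal{W}_n\circ\varphi_{x_{0}})^{\alpha}$, via the elementary inequality $|\sqrt a-\sqrt b\,|^{2}\le|a-b|$.

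First I would normalize: since $C_{\Omega}$ is homogeneous of degree $0$ we may take $\|f\|_{\alpha,2}=1$; put $u=|f|^{2}\mathcal{W}_n^{\alpha}$, $w=\mathcal{W}_n^{\alpha}$, and let $\mu_{u}(t)=m_{S}(\{u>t\})$, $\mu_{w}(t)=m_{S}(\{w>t\})$ with decreasing rearrangements $u^{*},w^{*}$ on $[0,V]$, $V=m_{S}(\mathbb{R}^{n})$. By the very definition of $c(\alpha)$ one has $\|\mathbf{1}\|_{\alpha,2}=\|I_{x_{0}}\|_{\alpha,2}=1$ and $\int u\,dm_{S}=\int w\,dm_{S}=c(\alpha)$. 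Theorem~\ref{tm:the-faber-krahn-inequalities-theorem} is exactly the majorization $u^{*}\prec w^{*}$ with equal total mass, so $C_{\mathbb{B}}(\mathbf{1})=\frac{1}{c(\alpha)}\int_{0}^{s}w^{*}$, $s=m_{S}(\Omega)$, while the Hardy--Littlewood inequality gives $C_{\Omega}(f)\le\frac{1}{c(\alpha)}\int_{0}^{s}u^{*}$; hence the hypothesis already forces $\int_{0}^{s}(w^{*}-u^{*})\le\delta\,c(\alpha)\,C_{\mathbb{B}}(\mathbf{1})$.

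Next I would produce a differential inequality with remainders. Writing $\log u=2\log|f|+\alpha\log\mathcal{W}_n$ and using $\Delta_{S}\log\mathcal{W}_n=-1$, $\Delta_{S}\log|f|\ge0$ (with Riesz measure $\nu_{f}\ge0$), the coarea formula, the divergence theorem and Cauchy--Schwarz yield, for a.e.\ $t$,
\[
\mathcal{I}\bigl(\mu_{u}(t)\bigr)^{2}\ \le\ \alpha t\,\mu_{u}(t)\bigl(-\mu_{u}'(t)\bigr)\ -\ 2t\,\nu_{f}(\{u>t\})\bigl(-\mu_{u}'(t)\bigr)\ -\ \bigl(P_{S}(\{u>t\})^{2}-\mathcal{I}(\mu_{u}(t))^{2}\bigr),
\]
where $\mathcal{I}$ is the isoperimetric profile and $P_{S}$ the perimeter in the spherical metric; for $u=w$ this is an equality (then $\nu_{f}=0$, the level sets are geodesic balls, and $|\nabla_{S} w|$ is constant on each level set). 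Running the comparison argument of Theorem~\ref{tm:the-faber-krahn-inequalities-theorem} against the rigid extremal profile $w^{*}$ while \emph{retaining} the nonnegative correction terms, one should obtain a \emph{reverse} estimate of the form
\[
C_{\mathbb{B}}(\mathbf{1})-C_{\Omega}(f)\ \gtrsim\ \int_{0}^{\infty}\bigl(P_{S}(\{u>t\})^{2}-\mathcal{I}(\mu_{u}(t))^{2}\bigr)K(t)\,dt\ +\ \int_{0}^{\infty}\nu_{f}(\{u>t\})\,\widetilde{K}(t)\,dt,
\]
so $\delta$ dominates an integrated isoperimetric deficit of the level sets (together with the Cauchy--Schwarz defect) and a weighted total Riesz mass of $\log|f|$, with kernels $K,\widetilde{K}$ and constants depending only on $m_{S}(\Omega),n,\alpha$.

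Finally I would pass from deficits to geometry and back to $|f|$. By the quantitative isoperimetric inequality on $\mathbb{S}^{n}$, $P_{S}(E)^{2}-\mathcal{I}(m_{S}(E))^{2}\gtrsim c_{n}\,\mathcal{A}_{S}(E)^{2}$ with $\mathcal{A}_{S}$ the Fraenkel asymmetry among geodesic balls of equal $m_{S}$-measure, so $\int\mathcal{A}_{S}(\{u>t\})^{2}K(t)\,dt\lesssim\delta$; combined with the nestedness of the family $\{u>t\}$, a ``two nested near-balls are nearly concentric'' lemma, and a Chebyshev estimate on the exceptional set of levels, this selects a single $x_{0}\in\mathbb{R}^{n}$ with $\int m_{S}(\{u>t\}\triangle B(x_{0},r(t)))\,dt$ small, where $m_{S}(B(x_{0},r(t)))=\mu_{u}(t)$. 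Independently, reading the near-equality as an ODE comparison $-\mu_{u}'\approx\mathcal{I}(\mu_{u})^{2}/(\alpha t\mu_{u})$ with the shared boundary value $\mu_{u}(0^{+})=V$ forces $\|\mu_{u}-\mu_{w}\|_{L^{1}}=\|u^{*}-w^{*}\|_{L^{1}(0,V)}$ small. Since $u_{x_{0}}$ has distribution function $\mu_{w}$ and geodesic balls about $x_{0}$ as level sets, the layer-cake identity $\|u-u_{x_{0}}\|_{L^{1}(dm_{S})}=\int_{0}^{\infty}m_{S}(\{u>t\}\triangle\{u_{x_{0}}>t\})\,dt$ combines the two bounds, and tracking the Chebyshev splitting together with the $n$-dependent degeneracy of $w^{*}$ near its supremum yields $\|u-u_{x_{0}}\|_{L^{1}(dm_{S})}\lesssim\delta^{4/(n+2)}$. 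Then, since $\bigl|\,|f|-I_{x_{0}}\,\bigr|^{2}\mathcal{W}_n^{\alpha}=\bigl|\sqrt u-\sqrt{u_{x_{0}}}\,\bigr|^{2}\le|u-u_{x_{0}}|$,
\[
\bigl\|\,|f|-I_{x_{0}}\,\bigr\|_{\alpha,2}^{2}=\frac{1}{c(\alpha)}\int\bigl|\,|f|-I_{x_{0}}\,\bigr|^{2}\mathcal{W}_n^{\alpha}\,dm_{S}\ \le\ \frac{1}{c(\alpha)}\|u-u_{x_{0}}\|_{L^{1}(dm_{S})}\ \lesssim\ \delta^{4/(n+2)},
\]
whence $\inf_{x_{0}}\|\,|f|-I_{x_{0}}\,\|_{\alpha,2}\le C\,\delta^{2/(n+2)}$ with $C=C(m_{S}(\Omega),n,\alpha)$, as claimed.

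I expect the two main obstacles to be: (i) the bookkeeping behind the reverse estimate of the third paragraph --- proving that the chain of inequalities used for Theorem~\ref{tm:the-faber-krahn-inequalities-theorem} wastes nothing, that the deficit at the single level $s=m_{S}(\Omega)$ propagates (by rigidity of the extremal profile) to control $u$ on its whole range, and that the kernels $K,\widetilde{K}$ do not degenerate as $t\to0^{+}$ or $t\to\|u\|_{\infty}$, a degeneration which would spoil the power of $\delta$; and (ii) the selection of a center $x_{0}$ valid simultaneously for essentially all levels --- this ``no drift of centers'' statement for the nested near-ball family $\{u>t\}$ is the genuinely geometric step, and it is there that the final exponent is actually pinned down.
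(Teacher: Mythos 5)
Your strategy is genuinely different from the paper's, and considerably more involved than it needs to be. You set up the quantitative-isoperimetric blueprint of \cite{GomesAndreRamosTilli}: track the isoperimetric deficit, Cauchy--Schwarz defect, and Riesz mass at every level, deduce that $\delta$ dominates their weighted integral, invoke the quantitative isoperimetric inequality on $\mathbb{S}^n$ to control asymmetries of level sets, and select a single center $x_0$ by a ``nested near-balls are near-concentric'' argument. The paper avoids essentially all of this. Its choice of $x_0$ is simply a maximum point of $u=|f|^2\mathcal{W}_n^{\alpha}$: with $\|f\|_{\alpha,2}=\|I_{x_0}\|_{\alpha,2}=1$, the Hilbert-space identity $\|\,|f|-I_{x_0}\,\|_{\alpha,2}^{2}=2-2\langle|f|,I_{x_0}\rangle$ and the point-evaluation inequality of Theorem~\ref{tm:point-evaluation-theorem} (applied to the log-subharmonic function $|f\circ\varphi_{x_0}|\cdot(\mathcal{W}_n\circ\varphi_{x_0})^{\alpha/2}/\mathcal{W}_n^{\alpha/2}$) give directly $\langle|f|,I_{x_0}\rangle\ge\sqrt{T}$, hence $\|\,|f|-I_{x_0}\,\|_{\alpha,2}^{2}\le 2(1-\sqrt{T})\le 2(1-T)$ where $T=\max u$. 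The whole problem is thus reduced to bounding $1-T$ by a power of $\delta$. For that, the paper uses the first half of your argument (the majorization step, bounding $\int_0^{s^*}(v^*-u^*)$ by $F(s_0)\,\delta_{s_0}$), then the key observation that $\int_0^{s^*}(v^*-u^*)\,ds=\int_{t^*}^{1}(\rho_0-\rho)\,d\tau\ge\int_T^{1}\rho_0(\tau)\,d\tau=:\phi(T)$ because $\rho\le\rho_0$ on $(t^*,1)$ and $\rho=0$ above $T$, and finally the explicit asymptotics $\phi(T)\asymp(1-T)^{(n+2)/2}$ as $T\to1^-$ coming from $\rho_0'(t)=-1/(\alpha t\,\Theta(\rho_0(t)))$. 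No quantitative isoperimetric inequality, no asymmetry functional, no center-drift lemma: both of the obstacles you flag at the end simply do not arise.

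On your route itself: the ``reverse estimate'' in your third paragraph (that the deficit $\delta$ dominates the integrated isoperimetric and Cauchy--Schwarz defects with non-degenerate kernels $K,\widetilde K$) is precisely the hard technical core of the \cite{GomesAndreRamosTilli} approach and is left entirely unverified; in particular, non-degeneracy of the kernels near $t\to\|u\|_\infty^-$ would have to be established, and that is exactly the regime where the paper's analysis of $\phi$ does the work. Likewise, your claimed bound $\|u-u_{x_0}\|_{L^1(dm_S)}\lesssim\delta^{4/(n+2)}$ is asserted without derivation, and the Chebyshev-on-exceptional-levels device typically costs a power. You should also be aware that the paper's own final string of inequalities appears to drop a square (it writes $\inf_{x_0}\|\,|f|-I_{x_0}\,\|\le 2(1-T)$ rather than $\inf_{x_0}\|\,|f|-I_{x_0}\,\|^2\le 2(1-T)$), so the exponent produced by the paper's argument as written is $\delta^{1/(n+2)}$ for the norm itself; if you can genuinely carry your program through to $\|u-u_{x_0}\|_{L^1}\lesssim\delta^{4/(n+2)}$ you would match or improve the stated exponent, but at present that step is not justified.
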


\begin{theorem}\label{tm:stability-convex-functional}
	Let \(p \geq 1\) and \(\alpha>0.\) Let \(G:[0,\infty)\to \mathbb{R}\) be a convex function and \(f \in \mathcal{B}_{\alpha,p}\) such that  \(\left\lVert f \right\rVert _{\alpha,p}=1.\) Then
	\[
		\int_{T}^{1}\left( G'(t)-G'_-(T) \right)\rho_0(t) \, dt\leq  \int_{\mathbb{R}^{n}}^{}G\left( W_n^{\alpha}(x) \right) \, dm_S(x) -\int_{\mathbb{R}^{n}}^{}G\left( |f(x)|^p \mathcal{W}_n^{\alpha}(x) \right) \, dm_S(x) ,
	\]
	where \(T=\max_{x \in \mathbb{R}^{n}} |f(x)|^{p}\mathcal{W}_n^{\alpha}(x)\) and \(\rho_0(t)=m_S\left( \left\{ W_n^{\alpha}>t \right\} \right)  .\)
\end{theorem}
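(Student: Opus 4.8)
The plan is to convert the stated inequality into an elementary estimate on the one‑dimensional distribution functions of $u:=|f|^{p}\mathcal{W}_n^{\alpha}$ and $w:=\mathcal{W}_n^{\alpha}$. Write $\mu(t)=m_S(\{u>t\})$ (so that $\rho_0(t)=m_S(\{w>t\})$ in the notation of the statement) and set $R(t)=\int_t^{\infty}\rho_0$, $M(t)=\int_t^{\infty}\mu$. From $\|f\|_{\alpha,p}=1=\|\mathbf 1\|_{\alpha,p}$ one has $\int_{\mathbb{R}^n}u\,dm_S=\int_{\mathbb{R}^n}w\,dm_S=c(\alpha)$, i.e.\ $M(0)=R(0)$; and $\max w=1$, so $\rho_0\equiv 0$ and $R\equiv 0$ on $[1,\infty)$. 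The only input I would take from the earlier sections is the super‑level‑set comparison in the form
\[
 M(c)=\int_{\mathbb{R}^n}(u-c)_+\,dm_S\ \le\ \int_{\mathbb{R}^n}(w-c)_+\,dm_S=R(c)\qquad(c\ge 0),
\]
which is Theorem~\ref{tm:convex-functional-theorem} applied to the convex function $t\mapsto(t-c)_+$; specialising to $c=1$ it also forces $u\le 1$ a.e., hence $\mu\equiv 0$ and $M\equiv 0$ on $[T,\infty)$, in particular $M(T)=0$. (Since $f$ is real‑analytic and nonzero, $\{u=0\}$ is $m_S$‑null, so $\mu(0^+)=\rho_0(0^+)=m_S(\mathbb{R}^n)$; replacing $G(0)$ by $\lim_{t\to 0^+}G(t)$, which changes neither side of the desired inequality, I may assume $G$ is continuous on $[0,\infty)$.)

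Next I would rewrite the deficit by the layer‑cake formula: for bounded $v\ge 0$ and convex $G$, with $G'_+$ the right derivative,
\[
 \int_{\mathbb{R}^n}G(v)\,dm_S=G(0)\,m_S(\mathbb{R}^n)+\int_0^{\infty}G'_+(t)\,m_S(\{v>t\})\,dt,
\]
the integral converging because $v$ is bounded and $m_S(\mathbb{R}^n)<\infty$. Subtracting the instances $v=w$ and $v=u$, the right‑hand side of the theorem equals $\mathcal{D}:=\int_0^{\infty}G'_+(t)\bigl(\rho_0(t)-\mu(t)\bigr)\,dt$. Since $\rho_0-\mu=(M-R)'$ a.e.\ and $M-R$ is bounded, continuous, and vanishes at $0$ and on $[1,\infty)$, one integration by parts gives
\[
 \mathcal{D}=\int_{[0,\infty)}\bigl(R(t)-M(t)\bigr)\,dG''(t),
\]
where $dG''$ is the nonnegative Lebesgue--Stieltjes measure of the increasing function $G'_+$; the boundary term at $0$ vanishes even when $G'_+(0^+)=-\infty$, because $(R-M)(t)=\int_0^t(\mu-\rho_0)=o(t)$ as $t\to 0^+$ while $tG'_+(t)\to 0$ for a convex $G$ continuous at $0$.

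Then I would split the last integral at $t=T$. On $(0,T)$ the integrand $R-M$ is $\ge 0$ by the comparison above, so that contribution is nonnegative and may be discarded. On $(T,\infty)$ we have $M\equiv 0$, hence $R-M=R\ge 0$, and $R\equiv 0$ beyond $1$; finally the atom of $dG''$ at $T$ contributes $\bigl(R(T)-M(T)\bigr)\bigl(G'_+(T)-G'_-(T)\bigr)=R(T)\bigl(G'_+(T)-G'_-(T)\bigr)\ge 0$, and it is precisely keeping this atom on the favourable side that upgrades $G'_+(T)$ to the sharper $G'_-(T)$. Therefore $\mathcal{D}\ge\int_{[T,1]}R(t)\,dG''(t)$, and substituting $R(t)=\int_t^1\rho_0$ and applying Tonelli over the triangle $\{T\le t\le\xi\le 1\}$,
\[
 \int_{[T,1]}R(t)\,dG''(t)=\int_T^1\rho_0(\xi)\,dG''\!\bigl([T,\xi)\bigr)\,d\xi=\int_T^1\rho_0(\xi)\bigl(G'_-(\xi)-G'_-(T)\bigr)\,d\xi=\int_T^1\bigl(G'(t)-G'_-(T)\bigr)\rho_0(t)\,dt,
\]
using $G'_-=G'$ Lebesgue‑a.e. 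This is the asserted lower bound for $\mathcal{D}$.

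Conceptually the left‑hand side is the ``$G$‑Bregman mass of $w$ above the level $T$'', namely $\int_{\{w>T\}}\bigl(G(w)-G(T)-G'_-(T)(w-T)\bigr)\,dm_S$, and the argument shows this must be absorbed by the deficit once the concentration estimate pins $u$ below $T<1=\max w$ at fixed total mass $c(\alpha)$. I expect the genuinely delicate points to be purely measure‑theoretic: justifying the layer‑cake identity and the integration by parts for an arbitrary convex $G$ (treating $G''$ as a measure that may be singular at $0$, and controlling the endpoints $0$ and $\infty$), and confirming that the two elementary facts $T\le 1$ and $M(T)=0$ are correctly inherited from the earlier sections; the inequality itself then drops out from the splitting at $t=T$.
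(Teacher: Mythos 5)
Your argument is correct, and it arrives at the right bound via the same two structural ingredients the paper uses (the comparison of super-level masses coming from Theorem~\ref{tm:convex-functional-theorem}, and the fact that $u\le T$ a.e.\ while $v$ lives on $[0,1]$), but the bookkeeping is genuinely different. The paper decomposes the \emph{function} $G=G_1+G_2$, where $G_1$ agrees with $G$ on $(0,T]$ and is replaced by the tangent line $G'_-(T)(t-T)+G(T)$ on $[T,1]$; since $G_1$ is itself convex, Theorem~\ref{tm:convex-functional-theorem} applies verbatim to give $\int G_1(v)\ge\int G_1(u)$, while $\int G_2(u)=0$ because $G_2\equiv 0$ below $T$, and $\int G_2(v)\,dm_S$ is then computed by a one-line layer-cake identity. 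You instead decompose the \emph{deficit integral} against the second-derivative measure $dG''$: layer-cake plus one Stieltjes integration by parts gives $\mathcal{D}=\int_{[0,\infty)}(R-M)\,dG''$, the piece on $(0,T)$ is discarded via $M\le R$, and the piece on $[T,1]$ is evaluated by Tonelli. The two are dual to one another (your $dG''$ computation essentially reconstructs the $G_1$/$G_2$ split level by level, and you correctly identify that keeping the atom of $dG''$ at $T$ on the $[T,1]$ side is exactly what produces $G'_-(T)$ rather than $G'_+(T)$), but the paper's route avoids the measure-theoretic care you needed for the endpoint behaviour of the boundary term at $0$ (the $tG'_+(t)\to 0$ and $(R-M)(t)=o(t)$ estimates) because Theorem~\ref{tm:convex-functional-theorem} already absorbs those issues internally. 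One small imprecision in your write-up: $M(1)=0$ only yields $u\le 1$ a.e., while $\mu\equiv 0$ on $[T,\infty)$ follows from $T=\max u$ (plus continuity of $u$), not from the $c=1$ specialisation; the paper gets $T\le 1$ more directly from the point-evaluation bound (Theorem~\ref{tm:point-evaluation-theorem}). This does not affect the validity of the argument.
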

In the next section, we will give some preliminary notions and results. The calculation of the Bergman weight is given in the third section, while the monotonicity result for the super-level sets of $|f(x)|^p\mathcal{W}_n^{\alpha}(x),$ for $f\in \mathcal{B}^p_{\alpha}$ is the subject of the fourth section. In sections 5 and 6, we will give proofs of localization estimates and generalized Wehrl-type bounds, while sections 7 and 8 are devoted to establishing stability versions of these theorems. Finally, in section 9 we give a review of the main properties of subharmonic function with respect to spherical measure. \\

\section{Preliminaries}
\subsection{Isometries of the unit sphere}
It is known that for the standard metric on the unit sphere the geodesics are big circles and that the distance function is given by \(d_{\mathbb{S}^{n}}(x,y)=\arccos \langle x,y \rangle.\)
For every orthogonal \((n+1)\times (n+1)\) matrix, the restriction of the map \(x \mapsto Ax\) is an isometry on the unit sphere. Now, let \(\psi:\:\mathbb{S}^{n}\to\mathbb{S}^{n}\) be an isometry and \(\Psi: \mathbb{R}^{n+1}\to\mathbb{R}^{n+1}\) given by
\[
	\Psi(x)= \psi\left( \frac{x}{|x|} \right) \cdot |x|,\quad x \in \mathbb{R}^{n}\setminus \{0\}, \quad \text{and} \quad \Psi(0)=0.
\]
It is evident that \(\Psi\) is continuous, and it is easy to show that \(\Psi\) is given by \(\Psi(x)=Ax\) where \(A\) is an orthogonal matrix.
\subsection{Invariance of an orthogonal map and the Laplace--Beltrami operator}
It is known that for every isometry \(\psi : (M,g_M) \to (N,g_N)\) of Riemannian manifolds
\[
	\Delta_{M} \psi^* = \psi^* \Delta_N
\]
holds, where \(\Delta_M\) and \(\Delta_N\) are Laplace--Beltrami operator on \(M\) and \(N\) (see \cite[Page 46]{canzani2013analysis} or \cite[p. 27]{chavel1984eigenvalues}). If we use the previous statement for \((M,g_M)=(N,g_N)\) to be the unit sphere \(\mathbb{S}^{n}\) with the standard metric and \(\psi(x) = Ax,\) where \(A\) is an orthogonal matrix, then for every \(\xi \in \mathbb{S}^{n}\) we have
\[
	\Delta_S(u \circ \psi)(\xi)=\left( \Delta_S\psi^* u \right) (\xi)= \psi^{*}\left( \Delta_Su \right) (\xi) =\left( \Delta_Su \right) \left( \psi(\xi) \right) .
\]
\subsection{Isoperimetric inequality for the unit sphere \texorpdfstring{\(\mathbb{S}^{n}\)}{ S n}}

We say that  measurable sets \(A_k \subset \mathbb{S}^{n}\) converge to \(A\) with respect to volume if the volume of the symmetric difference \(A_k \triangle A\) tends to zero.
For a measurable set \(A \subset \mathbb{S}^{n}\) the number given by
\[
	P(A)= \inf \left( \liminf_{i \to \infty} \mathcal{H}_{\mathbb{S}^{n}}^{n-1}\left( \partial M_i \right)  \right)
\]
where \(\inf\) is taken over all sequences of smooth embedded n-dimensional manifolds with boundary \(\partial M_i\) converging to \(A\) with respect to volume \cite[14.1.1]{burago1988geometric}.
In the book \cite[Theorem 14.3.1]{burago1988geometric} it is stated that for a measurable set \(A \subset \mathbb{S}^{n}\) the inequality
\[
	P(A) \geq P(D_A)
\]
holds, where \(D_A\) is a ball in \(\mathbb{S}^{n}\) with volume \(V(D_A)=\min\{V(A),V(\mathbb{S}^{n}\setminus A)\}.\)
In paragraph 14.6.1 of the same book it is stated that for a measurable set \(A\) we have \(P(A)\leq \mathcal{H}_{\mathbb{S}^{n}}^{n-1}(\partial A),\) and it is easy to see from the same paragraph that \(\mathcal{H}_{\mathbb{S}^{n}}^{n-1}(\partial D_A)= P( D_A).\) Therefore, for every measurable set \(A \subset \mathbb{S}^{n}\) we have
\[
	\mathcal{H}_{\mathbb{S}^{n}}^{n-1}\left( \partial A \right) \geq \mathcal{H}_{\mathbb{S}^{n}}^{n-1}\left( \partial D_A \right) .
\]
Let \(A \subset \mathbb{S}^{n}\) have a smooth boundary.  Then \(\partial A\) has induced Riemaniann metric, which makes it a metric space. Therefore, Hausdorff measures \(\mathcal{H}_{\partial A}^k\) are defined correctly. Also, \(\mathbb{S}^{n}\) is a metric space, and hence, it has its own Hausdorff measures \(\mathcal{H}^{k}.\) By \cite[Prop. 12.6 and 12.7]{taylorMeasureTheory2006} we have
\begin{align}\label{eq:hausdorff-measure-the-first-part}
	\begin{split}
		\mathcal{H}_{\mathbb{S}^{n}}^{n-1}\left(\partial  A \right) = & \mathcal{H}^{n-1}_{\partial A}(\partial A)=Vol(\partial A)
		=                                                              \int_{\partial A}^{}i_N dV_S                                \\
		=                                                             & \int_{S^{-1}\left( \partial A \right) }^{}S^{*}i_N dV_S
		=                                                              \int_{S^{-1}(\partial A)}^{}i_{S_*^{-1}N}S^{*}dV_S
	\end{split}
\end{align}
where \(N\) is the unit normal vector on \(\partial A.\) Notice that since \(N\) is the unit vector we see that the vector \(S^{-1}_* N=\sum_{k=1}^{n} N_k \frac{\partial }{\partial x_k} \) satisfies \(\sum_{k=1}^{n} 4\frac{N_k^2}{\left( 1+x^2 \right)^2 }= 1. \) Then \(S^{-1}_* N\) is a normal vector on \(S^{-1}(\partial A)\) but not the unit vector. The unit vector to \(S^{-1}\left( \partial A \right) \) is \(\tilde{N} = \frac{\left( 1+x^2 \right) }{2} S^{-1}_*N.\) Since \(S^{*}dV_S=\frac{2^{n}}{\left( 1+x^2 \right) ^{n}}dx\) from (\ref{eq:hausdorff-measure-the-first-part}) we obtain
\begin{equation}\label{eq:hausdorff-measure-of-the-boundary-of-set}
	\begin{split}
		\mathcal{H}^{n-1}_{\mathbb{S}^{n}}(\partial A)=\int_{S^{-1}(\partial A)}^{}\frac{2^{n-1}}{\left( 1+x^2 \right) ^{n-1}}i_{\tilde{N}} dV_{\mathbb{R}^{n}} & = \int_{S^{-1}(\partial A)}^{}\frac{2^{n-1}}{\left( 1+x^2 \right) ^{n-1}}\, d \mathcal{H}_{S^{-1}\left( \partial A \right) }^{n-1}(x) \\
		                                                                                                                                                        & = \int_{S^{-1}(\partial A)}^{}\frac{2^{n-1}}{\left( 1+x^2 \right) ^{n-1}}\, d \mathcal{H}_{\mathbb{R}^{n}}^{n-1}(x).
	\end{split}
\end{equation}
For every \(r \geq 0\) we have
\[
	V(r)=Vol_{\mathbb{S}^{n}} \{ \xi \in \mathbb{S}^{n} \mid d_S(\xi,e_{n+1})<r\} = m_S \left( \left\{  |x| <\arccos \frac{1-r^2}{1+r^2} \right\}  \right)
\]
and
\[
	P(r)=P\left( \left\{ \xi \in \mathbb{S}^{n} \mid d_S(\xi,e_{n+1}) <r\right\}  \right)=\mathcal{H}_{\mathbb{S}^{n}}^{n-1}\left( \left\{  \xi \in \mathbb{S}^{n}\mid d_S(\xi,e_{n+1})=r  \right\}  \right) .
\]
Since the function \(r \mapsto V(r)\) is increasing, it has an inverse function \(R(v)=r.\) We define the function
\[
	\Theta(v)=\frac{v}{P^2\left( R(v) \right) }.
\]
Notice that for every measurable \(A \subset \mathbb{S}^{n}\) we have
\begin{equation}\label{eq:isoperimetric-inequality-for-the-monotonicity-theorem}
	\left( \mathcal{H}_{\mathbb{S}^{n}}^{n-1}(\partial A) \right) ^2\geq \frac{Vol(A)}{\Theta(Vol(A))}.
\end{equation}

\section{Bergman Weight Function}\label{sec:bergman-weight-function}
The Bergman weight function on the unit disk in \(\mathbb{C}\) is given by \(w(z)=1-|z|^2,\) and solves the equation \(\Delta_h \log w=-1.\) Hence, we are searching for Bergman weight function on the unit sphere as a solution of equation:
\[
	\Delta_S \log (u_c)= c=\mathrm{const}<0,
\]
where \(u(x)=g(|x|)\) and \(c=-1.\) If we denote  \(h(r)=\log g(r)\) then
\begin{align*}
	\Delta_S \log (u) 
	 & = \frac{1+r^2}{4}\left[ (1+r^2) h''(r)+h'(r)\frac{(1+r^2)(n-1)+2(2-n)r^2}{r}\right].
\end{align*}
Taking \(k(r)=h'(r)\) we get the equation
\begin{equation}\label{eq:bergman-weight-function-function-k}
	\frac{1+r^2}{4}\left[ \left( 1+r^2 \right) k'+k\frac{(1+r^2)(n-1)+2(2-n)r^2}{r} \right]= c
\end{equation}
and conclude that
\(k(r)=\frac{r \left(r^2+1\right)^{n-2} \left(4 c \, _2F_1\left(\frac{n}{2},n;\frac{n}{2}+1;-r^2\right)+C_1 n r^{-n}\right)}{n},\)
where
\[
	_2F_1\left( a,b;c;t \right) = \sum_{k=0}^{\infty} \frac{(a)_n(b)_n}{(c)_n n!}t^n, \quad \text{for} |t|<1
\]
is the hypergeometric function.
We want our solution to be defined for \(r=0,\) and hence for \(C_1= 0\) we have
\[
	k_c(r)=\frac{4c}{n}r \left(r^2+1\right)^{n-2}  \, _2F_1\left(\frac{n}{2},n;\frac{n}{2}+1;-r^2\right).
\]
Notice that the hypergeometric function \(_2F_1(a,b;c;z)\) is correctly defined and analytic for \(\mathbb{C} \setminus [1,\infty),\) therefore, the function \(k_c\) is well-defined and solution of equation (\ref{eq:bergman-weight-function-function-k}).
From the Pfaff identity \(_2F_1(a,b;c;z)=(1-z)^{-b} \,_2F_1(c-a,b;c;\frac{z}{z-1})\) we get
\[
	k_c(r)=\frac{4rc}{n}\left( 1+r^2 \right) ^\frac{n-4}{2} \, _2 F_1 \left( \frac{n}{2},1-\frac{n}{2};1+\frac{n}{2};\frac{r^2}{1+r^2} \right).
\]
Notice that
\begin{align*}
	_2F_1'\left( \frac{n}{2},1-\frac{n}{2};1+\frac{n}{2};t \right) & = \frac{n(2-n)}{2(n+2)}{}_2F_1\left( \frac{n}{2}+1,2-\frac{n}{2};1+\frac{n}{2},t \right)                                          \\
	                                                               & =  \frac{n(2-n)}{2(n+2)} \left( 1-t \right) ^{\frac{n}{2}-1} {}_2F_1\left( 1,n;2+\frac{n}{2};t \right) \leq 0, \quad t \in (0,1),
\end{align*}
hence we have
\
\[
	\frac{4rc}{n}\left( 1+r^2 \right) ^{\frac{n-4}{2}} \leq k_c\left( r \right) \leq  \frac{4rc}{n}\left( 1+r^2 \right) ^{\frac{n-4}{2}}\frac{\Gamma\left( 1+ \frac{n}{2} \right) \Gamma\left( \frac{n}{2} \right) }{\Gamma(n)}
\]
Since \(h_c(r)=\int_{0}^{r}k_c(t)\, dt\) and
\[
	\int_{0}^{r}\frac{4tc}{n}\left( 1+t^2 \right) ^{\frac{n-4}{2}}  \, dt=  4c  \frac{\left( 1+r^2 \right) ^{\frac{n-2}{2}}-1}{n(n-2)},
\]
we obtain
\[
	\exp\{4c  \frac{\left( 1+r^2 \right) ^{\frac{n-2}{2}}-1}{n(n-2)}\}\leq u_c(x)\leq \exp\{4c\frac{\Gamma\left( 1+ \frac{n}{2} \right) \Gamma\left( \frac{n}{2} \right) }{\Gamma(n)} \cdot \frac{\left( 1+r^2 \right) ^{\frac{n-2}{2}}-1}{n(n-2)}\},
\]
for \(n >2.\) For \(n=2\) we have  \(k_c(r)=2rc\left( 1+r^2 \right) ^{-1},\) hence \(h_c(r)=c\log \left( 1+r^2 \right).\) Consequently, \(u_c(r)=\left( 1+r^2 \right) ^{c}.\)

We denote \(\mathcal{W}_n(x)=u_{-1}(|x|).\)


\section{Monotonicity Theorem}
In this section we prove the monotonicity theorem, from which the theorems given next sections follow.
\begin{theorem}\label{tm:the-monotonicity-theorem}
	Let \(a \geq 0,\) and \(\alpha>0.\)  Assume that \(f\) is a real-analytic complex valued function such that \(|f|: \mathbb{R}^{n}\to [0,\infty)\) is a log-subharmonic function with respect to \(\Delta_S.\)  Assume that the function \(u(x)=|f(x)|^{a}\mathcal{W}_n^{\alpha}(x)\) is a non-constant function such that \(\lim_{|x| \to \infty}u(x)=0\) uniformly. Then \(\rho(t)= m_S\left( \left\{ x \in \mathbb{R}^{n}\mid u(x)>t \right\}  \right) \) is absolutely continuous on \([0,\max u]\) and
	\begin{equation}\label{eq:the-monotonicity-theorem}
		\alpha  \Theta\left( \rho(t) \right) \rho'(t)+\frac{1}{t}\leq 0
	\end{equation}
	for almost every \(t \in (0,\max u)\)
\end{theorem}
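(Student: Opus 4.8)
The plan is to adapt the Nicola--Tilli/Kulikov super-level-set method to this spherical setting. First I would reduce to the smooth picture: since $f$ is real-analytic and $|f|$ is $\Delta_S$-log-subharmonic, the function $\log u = a\log|f| + \alpha \log \mathcal{W}_n$ satisfies $\Delta_S \log u \geq -\alpha$ away from the (measure-zero, real-analytic) zero set of $f$; equivalently $\Delta_S u \geq \alpha^{-1} u^{-1}\lvert \nabla_S u\rvert^2 \cdot(\text{something})$ — more usefully, $u \,\Delta_S u \geq \lvert\nabla_S u\rvert^2_{g} - \alpha\, u^2$ pointwise where $u>0$. The decay hypothesis $u(x)\to 0$ uniformly guarantees each super-level set $\{u>t\}$ for $t>0$ is a relatively compact subset of $\mathbb{R}^n$, so pulling back by the stereographic projection we may work with $A_t = S(\{u>t\})\subset\mathbb{S}^n$, a set of finite perimeter whose boundary is, for a.e.\ $t$ by Sard's theorem, a smooth hypersurface $\{u=t\}$.

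Next comes the core computation. Write $\rho(t) = m_S(\{u>t\}) = \mathrm{Vol}_{\mathbb{S}^n}(A_t)$. The coarea formula on $\mathbb{S}^n$ gives $-\rho'(t) = \int_{\{u=t\}} \lvert\nabla_S u\rvert_g^{-1}\,d\mathcal{H}^{n-1}_{\mathbb{S}^n}$ for a.e.\ $t$, and absolute continuity of $\rho$ on $[0,\max u]$ follows from this together with uniform decay (no plateau at positive levels because $u$ is real-analytic and non-constant, so $\{u=t\}$ has measure zero for every $t$). Then I would differentiate $\int_{\{u>t\}} \lvert\nabla_S u\rvert_g\, dm_S$ or, in the cleaner route, integrate the differential inequality $\Delta_S u \cdot u \geq \lvert\nabla_S u\rvert^2_g - \alpha u^2$ over $\{u>t\}$. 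Applying the divergence theorem on $\mathbb{S}^n$, $\int_{\{u>t\}}\Delta_S u\, dm_S = -\int_{\{u=t\}}\lvert\nabla_S u\rvert_g\, d\mathcal{H}^{n-1}$ (inward normal), and combining with Cauchy--Schwarz $\bigl(\int_{\{u=t\}}d\mathcal{H}^{n-1}\bigr)^2 \leq \int_{\{u=t\}}\lvert\nabla_S u\rvert_g\, d\mathcal{H}^{n-1}\cdot\int_{\{u=t\}}\lvert\nabla_S u\rvert_g^{-1}\, d\mathcal{H}^{n-1}$ yields a relation between $\mathcal{H}^{n-1}_{\mathbb{S}^n}(\partial A_t)^2$, $-t\rho'(t)$, and $\int_{\{u>t\}}\lvert\nabla_S u\rvert^2\,u^{-1}$. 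The last term I would control by the layer-cake/Federer identity $\int_{\{u>t\}}\lvert\nabla_S u\rvert^2 u^{-1}\,dm_S = \int_t^{\max u} s^{-1}\bigl(\int_{\{u=s\}}\lvert\nabla_S u\rvert_g\,d\mathcal{H}^{n-1}\bigr)ds$, which after a further integration by parts feeds back into the inequality; tracking this carefully produces exactly $\alpha\,\mathcal{H}^{n-1}_{\mathbb{S}^n}(\partial A_t)^{-2}\cdot(-t\rho'(t)) \geq 1$ in differential form, i.e.\ $\alpha\,\Theta(\rho(t))\rho'(t) + \tfrac1t \leq 0$ after invoking the isoperimetric inequality \eqref{eq:isoperimetric-inequality-for-the-monotonicity-theorem} to replace $\mathcal{H}^{n-1}_{\mathbb{S}^n}(\partial A_t)^2$ by $\rho(t)/\Theta(\rho(t))$ (valid since $\Theta$ is defined via geodesic balls and balls are the isoperimetric sets).

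The main obstacle I expect is the handling of the zero set of $f$ and the non-smooth levels simultaneously — justifying the divergence-theorem and coarea steps on $\{u>t\}$ when this set may touch $\{f=0\}$, where $\log u = -\infty$ but $u=0$ so the level set $\{u=t\}$ for $t>0$ stays away from it; still, one must argue that $u$ is $C^1$ (indeed real-analytic) on $\{u>0\}$ and that the subharmonicity inequality $\Delta_S\log|f|\geq 0$ passes to the distributional statement $\Delta_S u \geq -\alpha u$ there, then approximate $\{u>t\}$ by smooth domains (regular values) and pass to the limit in $t$. A secondary technical point is establishing absolute continuity of $\rho$ rigorously: this should follow from the real-analyticity (so the critical values of $u$ form a discrete set and the singular part of $-\rho'$ vanishes) combined with the coarea formula, but it must be stated carefully since the differential inequality \eqref{eq:the-monotonicity-theorem} is only asserted a.e.
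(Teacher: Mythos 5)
Your high-level strategy matches the paper's: coarea for $-\rho'(t)$, Cauchy--Schwarz on the perimeter integral, a divergence-theorem identity, the lower bound $\Delta_S\log u\ge -\alpha$, and finally the spherical isoperimetric inequality to pass to $\Theta$. But the specific route you sketch at the core step does not close, and the reason is worth isolating.

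You propose to integrate $u\,\Delta_S u \ge |\nabla_S u|^2 - \alpha u^2$ over $\{u>t\}$, apply the divergence theorem to $\nabla_S u$ to get $\int_{\{u>t\}}\Delta_S u = -\int_{\{u=t\}}|\nabla_S u|$, and control $\int_{\{u>t\}}|\nabla_S u|^2 u^{-1}$ by the coarea identity $\int_t^{T} s^{-1}\bigl(\int_{\{u=s\}}|\nabla_S u|\bigr)\,ds$. Write $g(t)=\int_{\{u=t\}}|\nabla_S u|\,d\mathcal{H}^{n-1}$. After dividing the pointwise inequality by $u$ and integrating, this chain gives
\[
g(t) \ \le\ \alpha\Bigl(t\rho(t)+\int_t^{T}\rho(s)\,ds\Bigr) - \int_t^{T}\frac{g(s)}{s}\,ds,
\]
an integro-differential inequality. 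Combined with Cauchy--Schwarz and the isoperimetric inequality you would need the \emph{pointwise} bound $g(t)\le \alpha\,t\,\rho(t)$, but the integro-differential inequality only yields the weaker integrated estimate $\int_t^T g(s)/s\,ds\le \alpha\int_t^T\rho(s)\,ds$ (set $E(t)=\int_t^T g/s-\alpha\int_t^T\rho$; the inequality reads $tE'(t)\ge E(t)$, hence $E/t$ is nondecreasing with $E(T)=0$, so $E\le 0$). There is no way to recover the pointwise bound from this without further input, and the natural attempt (requiring $g(s)\ge \alpha s\rho(s)$ under the integral) runs in the \emph{wrong} direction relative to what you are trying to prove.

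The fix is the one the paper uses: apply the divergence theorem to the field $\nabla\log u$ (in stereographic coordinates, to $\left(1+|x|^2\right)^{2-n}\nabla\log u$, for which $\mathrm{div}$ reproduces $\Delta_S\log u$ up to the volume weight). The decisive observation is that on $\{u=t\}$ one has $|\nabla\log u|=|\nabla u|/u=|\nabla u|/t$, so the boundary term already carries the factor $1/t$ and the divergence theorem gives \emph{directly}
\[
\frac{1}{t}\int_{\partial A_t}\frac{|\nabla u|}{(1+|x|^2)^{n-2}}\,d\mathcal{H}^{n-1}
= -\,4\int_{A_t}\frac{\Delta_S\log u}{(1+|x|^2)^{n}}\,dx
\ \le\ \frac{4\alpha}{2^n}\,\rho(t),
\]
which is exactly the pointwise bound $g(t)\le\alpha\,t\,\rho(t)$ (in the weighted form) that closes with Cauchy--Schwarz and the isoperimetric inequality. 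No coarea identity on $|\nabla u|^2/u$ and no extra $\int_{A_t}u$ term ever appears.

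Your technical concerns about the zero set of $f$, critical values of $u$, Sard-type regularity of level sets, and absolute continuity of $\rho$ are real but routine given real-analyticity; the paper handles them exactly as you anticipate (measure-zero level and critical sets, coarea, and the observation that the remaining singularities are isolated). The genuine gap is the misplaced divergence theorem.
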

\begin{proof}
	Let \(A_t=\{ x \in \mathbb{R}^{n}\mid u(x)>t\}.\) Then \(\rho(t)=m_S(A_t).\) Notice that the function \(u\) is a real-analytic function \cite[Prop. 2.2.8]{Krantz2002Parks} almost everywhere.
	By \cite{Krantz2002Parks} we find that for every \(t \in (0,\max u)\) the set \(\{u=t\}\) has zero measure, which implies that \(\rho\) is a continuous function.
	In addition, the set \(\{ x \in \mathbb{R}^{n} \mid \left| \nabla u \right| =0\}\) has a zero measure, and it is a set of isolated points (see \cite{Souchek1972}). Therefore, for almost every \(t \in (0,\max u)\)  we have that \(\partial \{u > t\} = \{u=t\},\) and for every such \(t\) we have that \(\partial A_t\) is a smooth hypersurface.
	We start with the coarea formula (see \cite{Brothers1988})
	\[
		\rho(t)= \int_{A_t}^{}\frac{2^{n}}{\left( 1+x^2 \right) ^{n}}\, dx =\int_{t}^{\max u}\int_{|u(x)|=\kappa}^{}\frac{2^{n}\left| \nabla u \right| ^{-1}}{\left( 1+x^2 \right) ^{n}}\, d \mathcal{H}^{n-1}_{\mathbb{R}^{n}}(x) \, d \kappa
	\]
	from which we get
	\begin{equation}\label{eq:derivative-of-function-rho}
		- \rho'(t)=\int_{u=t}^{}\left| \nabla u \right| ^{-1}\frac{2^{n}\,  d \mathcal{H}^{n-1}_{\mathbb{R}^{n}}(x)}{\left( 1+x^2 \right) ^{n}}.
	\end{equation}
	Using the same approach as in papers \cite{NicolaTilli2022,kulikov2022functionals-with-extrema,kalaj2024}, our next step is to apply the Cauchy-Schwarz inequality to get
	\begin{align}\label{eq:monotonicity-theorem-after-applying-Cauchy-Schwarz}
		\begin{split}
			\left[ \mathcal{H}^{n-1}_{\mathbb{S}^{n}}\left( S^{-1}\left( \partial A_t \right)  \right)   \right] ^2 & = \left[ \int_{\partial A_t}^{}\frac{2^{n-1} }{\left( 1+x^2 \right) ^{n-1}}\, d \mathcal{H}^{n-1}_{\mathbb{R}^{n}}  \right] ^2                                                                                                                                                     \\
			                                                                                                        & = \int_{\partial A_t}^{}\left| \nabla u \right| ^{-1} \frac{2^{n}\, d \mathcal{H}^{n-1}_{\mathbb{R}^{n}}(x)}{\left( 1+x^2 \right) ^{n}} \cdot \int_{\partial A_t}^{}\frac{\left| \nabla u \right| 2^{n-2}}{\left( 1+x^2 \right) ^{n-2}}\, d \mathcal{H}^{n-1}_{\mathbb{R}^{n}}(x),
		\end{split}
	\end{align}
	for almost all \(t \in (0,\max u).\)
	Let us denote \(\nu = \nu(x)\) the outward unit normal to \(\partial A_t\) at the point \(x.\) Notice that, \(\nabla u\) is parallel to \(\nu,\) but directed in the opposite direction. Therefore, we have  \(\left| \nabla u \right| = - \langle \nabla u,\nu \rangle.\) As well for every \(x \in \partial A_t\) holds \(u(x)=t,\) hence
	\[
		\frac{\left| \nabla u(x) \right| }{t}=\frac{\left| \nabla u \right| }{u}=-\langle \nabla \log u(x),\nu \rangle.
	\]
	The second integral on the RHS of (\ref{eq:monotonicity-theorem-after-applying-Cauchy-Schwarz}) can be calculated by the Gauss's divergence theorem:
	\begin{align*}
		\int_{\partial A_t}^{}\frac{\left| \nabla u \right| \, d \mathcal{H}^{n-1}(x)}{\left( 1+x^2 \right) ^{n-2}}= - t \int_{A_t}^{}\mathrm{div}\left( \frac{\nabla \log u(x)}{\left( 1+x^2 \right) ^{n-2}} \right) \, dx   = -4t \int_{A_t}^{}\frac{1}{\left( 1+x^2 \right) ^{n}}\Delta_S \log u(x)\, dx.
	\end{align*}
	As one can see
	\[
		\Delta_S \log u(x)=a \Delta_S  \log |f(x)|+\alpha \Delta_S \log \mathcal{W}_n(x) \geq -\alpha.
	\]
	Submitting the previous discussion in (\ref{eq:monotonicity-theorem-after-applying-Cauchy-Schwarz}), and using  (\ref{eq:derivative-of-function-rho}), we get
	\begin{align*}
		\left( \mathcal{H}^{n-1}_{\mathbb{S}^{n}}\left( S^{-1}\left( \partial A_t \right)  \right)   \right) ^2 & \leq \left( -\rho(t) \right) \int_{\partial A_t}^{}\frac{\left| \nabla u \right| 2^{n-2} d \mathcal{H}^{n-1}_{\mathbb{R}^{n}}(x)}{\left( 1+x^2 \right) ^{n-2}} \\
		                                                                                                        & \leq - 2^{n}t \alpha  \rho'(t) \frac{\rho(t)}{2^{n}}                                                                                                           \\
		                                                                                                        & =-t \alpha c \rho'(t)\rho(t).
	\end{align*}
	From (\ref{eq:isoperimetric-inequality-for-the-monotonicity-theorem}) we obtain \(\left[ \mathcal{H}^{n-1}_{\mathbb{S}^{n}}\left( S^{-1}\left( \partial A_t \right)  \right)   \right] ^2  \geq  \frac{\rho(t)}{\Theta(\rho(t))},\)
	hence,
	\[
		\frac{\rho(t)}{\Theta (\rho(t))} \leq -t \alpha  \rho'(t)\rho(t),
	\]
	i.e. \(\alpha  \Theta\left( \rho(t) \right) \rho'(t)+\frac{1}{t}\leq 0,\) for almost every \(t \in (0,\max u).\)
\end{proof}
\begin{remark}\label{rm:the-function-g-is-constant-when-f-is-equal-1}
	Notice that for \(f \equiv 1,\) all inequalities in the previous proof become equalities for all values of \(a\) and \(\alpha.\) Indeed, the isoperimetric inequality becomes an equality since \(A_t\) is a ball with centered at \(x=0\) and in the Cauchy-Schwarz inequality we have equality since \(|\nabla u|\) and \(1+x^2\) are constant on \(\partial A_t.\) Therefore, \(f\equiv 1\) and \(\rho_0(t)=m_S\left( \left\{ \mathcal{W}_n^{\alpha}>t \right\}  \right) \) we have
	\[
		\alpha \Theta(\rho_0(t))\rho_0'(t)+\frac{1}{t}=0.
	\]
\end{remark}
\begin{remark}\label{rm:inverse-functions-differentiability-identities}
	For a function \(u\) we define the decreasing rearrangement of \(u\) as
	\[
		u^*(s)=\sup \{ t \geq 0 : \rho(t) > s \}
	\]
	(more details on rearrangements can be found in \cite{Baernstein}).
	Since \(\rho\) is a strictly decreasing function, we can see that  \( \left. u^* \right|_{[0,m_S(\mathbb{R}^{n})]}   =\left.\rho\right|_{[0,\max u]}^{-1}.\) By the same argumentation as in \cite[Lemma 3.2]{NicolaTilli2022} we conclude that \(u^*\) is absolutely continuous in \([0,m_S(\mathbb{R}^{n})].\) By elementary calculus and \eqref{eq:the-monotonicity-theorem}  we see that
	\[
		\alpha \Theta\left( s \right) u^{*}(s) + (u^{*})'(s) \geq  0,
	\]
	for almost every \(s \in [0,m(S)].\) Also, for \(v=\mathcal{W}_n^\alpha,\) we obtain
	\(
	\alpha \Theta\left( s \right) v^{*}(s) + (v^{*})'(s)  = 0.
	\)
\end{remark}
\section{Proof of Theorem \ref{tm:the-faber-krahn-inequalities-theorem}}
First, we will deal with the function such that \(\lim_{|x| \to \infty} \left| f(x) \right| ^{p}\mathcal{W}_n^{\alpha}=0.\) After the next lemma, we give a proof of Theorem \ref{tm:the-faber-krahn-inequalities-theorem} in the general case.
\begin{lemma}\label{lm:the-faber-krahn-inequalities-lemma}
	Let \(\alpha >0\) and \(p  >0 \) be fixed and \(f \in \mathcal{B}_{\alpha,p}\) such that \(\lim_{|x| \to \infty} |f(x)|^{p}\mathcal{W}_n^{\alpha}(x) = 0.\) The concentration operator satisfies the inequality
	\[
		C_{\Omega}(f) \leq  C_{\mathbb{B}}\left( \mathbf{1} \right),
	\]
	where \(\mathbb{B}\) is the ball centered at the origin such that \(m_S(\mathbb{B})=m_S(\Omega).\)
\end{lemma}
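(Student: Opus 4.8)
The plan is to reduce the claimed inequality, by the bathtub (Hardy--Littlewood) principle, to a one-dimensional comparison of the decreasing rearrangements of $u(x)=|f(x)|^{p}\mathcal{W}_n^{\alpha}(x)$ and $v(x)=\mathcal{W}_n^{\alpha}(x)$, and then to feed in the differential relations furnished by Theorem~\ref{tm:the-monotonicity-theorem} and Remark~\ref{rm:inverse-functions-differentiability-identities}. Since $C_\Omega$ is homogeneous of degree zero in $f$, I would first normalize so that $\lVert f\rVert_{\alpha,p}=1$ (assuming also $f\not\equiv 0$, otherwise there is nothing to prove; the hypothesis $\lim_{|x|\to\infty}u=0$ then forces $u$ to be non-constant, so Theorem~\ref{tm:the-monotonicity-theorem} applies with $a=p$). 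From the definition of $c(\alpha)$ one reads off that $\lVert\mathbf{1}\rVert_{\alpha,p}=1$ as well, hence $\int_{\mathbb{R}^{n}}u\,dm_S=\int_{\mathbb{R}^{n}}v\,dm_S=c(\alpha)$. Writing $M=m_S(\mathbb{R}^{n})=\mathrm{Vol}(\mathbb{S}^{n})$ and $s=m_S(\Omega)=m_S(\mathbb{B})$, the Hardy--Littlewood inequality gives $\int_\Omega u\,dm_S\le\int_0^s u^{*}(\sigma)\,d\sigma$, so $C_\Omega(f)\le c(\alpha)^{-1}\int_0^s u^{*}(\sigma)\,d\sigma$; and since $\mathcal{W}_n$ is radial and strictly decreasing in $|x|$ (cf.\ Section~\ref{sec:bergman-weight-function}), the super-level sets of $v$ are balls centered at the origin, so the centered ball $\mathbb{B}$ of $m_S$-measure $s$ coincides up to a null set with $\{v>v^{*}(s)\}$ and $C_{\mathbb{B}}(\mathbf{1})=c(\alpha)^{-1}\int_0^s v^{*}(\sigma)\,d\sigma$. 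It therefore suffices to prove $\int_0^s u^{*}\le\int_0^s v^{*}$ for every $s\in[0,M]$.

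For this I would invoke Remark~\ref{rm:inverse-functions-differentiability-identities}: $(u^{*})'(s)+\alpha\Theta(s)u^{*}(s)\ge 0$ and $(v^{*})'(s)+\alpha\Theta(s)v^{*}(s)=0$ for a.e.\ $s\in[0,M]$, while both rearrangements are absolutely continuous on $[0,M]$, strictly positive on $[0,M)$ (as $f$ is real-analytic and not identically zero, $\{u=0\}$ is $m_S$-null, so $\rho(t)\to M$ as $t\to 0^{+}$), and tend to $0$ as $s\to M$ because $u,v\to 0$ at infinity. Dividing the first relation by $u^{*}>0$ and the second by $v^{*}>0$ and subtracting gives $\bigl(\log(u^{*}/v^{*})\bigr)'\ge 0$, i.e.\ $u^{*}/v^{*}$ is non-decreasing on $[0,M)$. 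Hence $u^{*}-v^{*}=v^{*}\bigl(u^{*}/v^{*}-1\bigr)$ changes sign at most once on $[0,M]$, from $\le 0$ to $\ge 0$, so $\Phi(s):=\int_0^s(u^{*}-v^{*})\,d\sigma$ is first non-increasing and then non-decreasing; since $\Phi(0)=0$ and $\Phi(M)=c(\alpha)-c(\alpha)=0$, we conclude $\Phi\le 0$ on $[0,M]$, which is precisely the required estimate, so $C_\Omega(f)\le C_{\mathbb{B}}(\mathbf{1})$.

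The substantive content is the monotonicity of the ratio $u^{*}/v^{*}$, which is where the isoperimetric inequality enters; but this has already been packaged into Theorem~\ref{tm:the-monotonicity-theorem} and Remark~\ref{rm:inverse-functions-differentiability-identities}, so what remains is only the elementary ``single crossing'' argument above. The points that deserve care (none of them hard) are the absolute continuity and strict positivity of $u^{*},v^{*}$ on $[0,M)$ and their vanishing at $s=M$ --- here the hypothesis $\lim_{|x|\to\infty}|f(x)|^{p}\mathcal{W}_n^{\alpha}(x)=0$ is essential, both for applying Theorem~\ref{tm:the-monotonicity-theorem} and for guaranteeing $u^{*}(M^{-})=0$, which is exactly what makes $\Phi(M)=0$. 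For the general case of Theorem~\ref{tm:the-faber-krahn-inequalities-theorem}, where only $\limsup_{|x|\to\infty}|f|^{p}\mathcal{W}_n^{\alpha}<\infty$ is assumed, I would expect a separate approximation step carried out after this Lemma.
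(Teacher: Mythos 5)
Your proof is correct, and at its core it is the same single-crossing argument as the paper's, driven by the same differential inequality from Theorem~\ref{tm:the-monotonicity-theorem} (repackaged in Remark~\ref{rm:inverse-functions-differentiability-identities}). The presentation, however, is a dual reformulation: the paper works with the distribution functions $\rho(t)=m_S(\{u>t\})$ and $\rho_0$, introduces an implicit reparametrization $\beta(t)$ defined by $\rho_0(\beta(t))=\rho(t)$, and studies $\varPsi(t)=t\rho(t)+\int_t^T\rho\,d\tau-\beta(t)\rho_0(\beta(t))-\int_{\beta(t)}^1\rho_0\,d\tau$, computing $\varPsi'(t)=\rho'(t)(t-\beta(t))$ and showing it changes sign once from $-$ to $+$ with $\varPsi(0)=\varPsi(T)=0$; you instead work with the decreasing rearrangements $u^*,v^*$ (the generalized inverses of $\rho,\rho_0$), show the ratio $u^*/v^*$ is non-decreasing so $u^*-v^*$ crosses zero at most once, and conclude $\Phi(s)=\int_0^s(u^*-v^*)\le 0$ from $\Phi(0)=\Phi(M)=0$. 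The two are related by the change of variable $s\leftrightarrow t$. Your version avoids the auxiliary function $\beta$ and is arguably more transparent; in fact the paper itself adopts precisely your formulation (monotonicity of $r(s)=u^*(s)/v^*(s)$) later, in the proof of Theorem~\ref{tm:stability-of-concentration-operator}. Your care about strict positivity of $u^*,v^*$ on $[0,M)$ (to justify dividing) and vanishing at $s=M$ (to get $\Phi(M)=0$) addresses the only points where the rearrangement version needs extra justification compared to the paper's route.
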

\begin{proof} As usually, denote \(u=|f|^{p}\mathcal{W}_n^{\alpha}.\) Without using generality, we can assume that \(\left\lVert f \right\rVert _{\alpha,p}=1.\) By the bathtub lemma we have
	\[
		\int_{\Omega}^{}u(x)\, dm_S(x) \leq \int_{A_t}^{}u(x)\, dm_S(x) ,
	\]
	where is \(t\) such that \(m_S(A_t)=\Omega.\)
	Following the idea of \cite{melentijevic2025} we have the
	\begin{align*}
		\int_{A_t}^{}u(x)\, dm_S(x) & = \int_{0}^{t}\left( \int_{\{u>t\}}^{}\, dm_S(x) \right)  \, d\tau + \int_{t}^{T}\left( \int_{\{u>\tau\}}^{}\, dm_S(x)  \right) \, d\tau \\
		                            & = t \rho(t)+\int_{t}^{T}\rho(\tau)\, d\tau,
	\end{align*}
	where \(T=\max u.\) Denote \(\rho_0(t)=m_S\left( \{ \mathcal{W}_n^{\alpha}>t\} \right) .\) Our goal is to prove
	\[
		\varPsi(t)= t \rho(t)+\int_{t}^{T}\rho(\tau)\, d\tau - \beta (t) \rho_0(\beta(t)) - \int_{\beta(t)}^{1}\rho_0(\tau)\, d\tau \leq  0,
	\]
	where \(\beta(t)\) is such that \( \rho_0\left( \beta(t) \right)  = \rho(t).\)
	Since
	\[
		\varPsi'(t)= t \rho'(t) - \beta(t) \rho_0'\left( \beta(t) \right)   \beta'(t),
	\]
	and since \(\rho(t)=\rho_0(\beta(t)) \Rightarrow \rho'(t)=\rho_0'(\beta(t))\beta'(t),\) we have
	\[
		\varPsi(t) = \rho'(t) \left( t-\beta(t) \right) .
	\]
	Notice that \(\rho(t)\) is a decreasing function on \([0,T]\), and hence \(\rho'(t) <0\) (see Theorem \ref{tm:the-monotonicity-theorem}). Also, since \(t > \beta(t) \Leftrightarrow \rho_0(t)<\rho_0\left( \beta(t) \right) = \rho(t)\) we have that the sign of \(\varPsi\) is opposite to the sign of \(\rho(t)-\rho_0(t).\) From Theorem \ref{tm:the-monotonicity-theorem}, we conclude that
	\[
		\alpha  \Theta\left( \rho(t) \right) \rho'(t) + \frac{1}{t}\leq  0 \quad \text{and} \quad \alpha \Theta \left( \rho_0(t) \right) \rho_0'(t)+\frac{1}{t}=0,
	\]
	therefore,
	\[
		\alpha \left( \Theta (\rho(t)) \rho'(t)-\Theta(\rho_0(t)) \rho_0'(t)\right) \leq  0,
	\]
	for almost every \(t \in [0,T].\)
	In light of this, we see that the function \(V(\rho(t))-V(\rho_0(t))\) is decreasing, where \(V(x)=\alpha \int_{1}^{x}\Theta(\tau)\, d\tau .\)
	Notice that \(V\) is an absolutely continuous function and \(\rho\) is a decreasing, absolutely continuous function, therefore \(V(\rho)\) is  also absolutely continuous, as well as \(V(\rho)-V(\rho_0).\) Hence, a non-positive derivative of \(V(\rho)-V(\rho_0)\) implies that this function is decreasing.
	Since \(V\) is an increasing function, we deduce that \(\rho(t)-\rho_0(t)\) is positive for some initial interval, and negative thereafter. Therefore, \(\varPsi'\) takes negative and then positive values, which means that the function \(\varPsi\) first decreases and then increases. Since \(\max \{ \varPsi(0),\varPsi(T)\}= 0\) we have  \(\varPsi(t)\leq 0.\) Therefore,
	\[
		\int_{A_t}^{}u(x)\, dm_S(x) \leq \int_{\{\mathcal{W}_n^{\alpha}>\beta(t)\}}^{}\mathcal{W}_n^{\alpha}(x)\, dm_S(x).
	\]
	Since \(\mathcal{W}_n^{\alpha}\) is a radial and a strictly decreasing function we have the desired conclusion.
\end{proof}

\paragraph{\textbf{Proof of Theorem \ref{tm:the-faber-krahn-inequalities-theorem}}}
In Lemma  \ref{lm:the-faber-krahn-inequalities-lemma} it is proven that for every \(\Omega\) and \(f\) such that \(\log |f|\) is \(\Delta_S\) subharmonic with the property that \(\limsup_{|x| \to \infty} |f|^{p}\mathcal{W}_n^{\alpha}=0,\) the inequality
\[
	\frac{1}{c(\alpha)\left\lVert f \right\rVert _{\alpha,p}^{p}} \int_{\Omega}|f(x)|^p \mathcal{W}_n^{\alpha}(x) \, dm_S(x)\leq \int_{\mathbb{B}}^{}\mathcal{W}_n^{\alpha}(x)\, dm_S(x),
\]
holds.
Notice that \(f \in B_{\alpha}^{p}\) implies \(f \in B_{\alpha+\epsilon}^p,\) and that \(\limsup_{|x| \to \infty} \left| f(x) \right| ^{p}\mathcal{W}_n^{\alpha}(x)<\infty\) implies \(\limsup_{|x| \to \infty} \left| f(x) \right| ^{p}\mathcal{W}_n^{\alpha+\epsilon}(x)=0,\) for every \(\epsilon >0.\) Hence,
\[
	\frac{1}{c(\alpha+\epsilon)\left\lVert f \right\rVert _{\alpha+\epsilon,p}^{p}}\int_{\Omega}^{}\left| f(x) \right| ^{p}\mathcal{W}_n^{\alpha+\epsilon}(x)\, dm_S(x) \leq \int_{\mathbb{B}}^{}\mathcal{W}_n^{\alpha+\epsilon}(x)\, dm_S(x).
\]
By the monotone convergence theorem we have \(\lim_{\epsilon \to 0+} c(\alpha+\epsilon) =c(\alpha),\) \(\lim_{\epsilon \to 0+}\left\lVert f \right\rVert _{\alpha+\epsilon,p}^{p}=\left\lVert f \right\rVert _{\alpha,p}^{p} \) and \(\lim_{\epsilon \to 0+} \int_{\Omega}^{}|f(x)|^p \mathcal{W}_n^{\alpha+\epsilon}(x) =\int_{\Omega}^{}|f(x)|^p\mathcal{W}_n^{\alpha}\, dm_S(x). \) Therefore, we have
\[
	\frac{1}{c(\alpha)\left\lVert f \right\rVert _{\alpha,p}^p}\int_{\Omega}^{}\left| f(x) \right| ^{p}\mathcal{W}_n^{\alpha}(x)\, dm_S(x) \leq \int_{\mathbb{B}}^{}\mathcal{W}_n^{\alpha+\epsilon}(x)\, dm_S(x). \eqno\qed
\]
\section{Proof of Theorem \ref{tm:convex-functional-theorem}}

\noindent\textbf{Proof of Theorem \ref{tm:convex-functional-theorem}.}
Let \(u=|f|^{p}\mathcal{W}_n^\alpha,\) and \(A_t=\{u > t \}.\) Here we use the standard notation of measure theory, namely \(f_+=\max \{f,0\}.\)
By Theorem \ref{tm:the-faber-krahn-inequalities-theorem}, we have
\begin{gather*}
	\int_{\mathbb{R}^{n}}^{}\left( u(x) - t \right) _+\, dm_S(x)\\
	= \int_{A_t}^{}\left( u(x)-t \right) \, dm_S = \int_{A_t}^{}u\, dm_S  - t m_S(A_t) \leq \int_{A_t^*}^{}\mathcal{W}_n^{\alpha}(x)\, dm_S - t m_S(A_t^*)  \\
	= \int_{A_t^*}^{}\left( \mathcal{W}_n^{\alpha}(x)-t \right) \, dm_S \leq \int_{A_t^*}^{}\left( \mathcal{W}_n^{\alpha}(x)-t \right) _+\, dm_S \leq \int_{\mathbb{R}^{n}}^{}\left( \mathcal{W}_n^{\alpha}(x)-t \right)_+ \, dm_S
\end{gather*}
where \(A^*\) is the ball with center in \(0 \in \mathbb{R}^{n}\) and has the spherical measure equal to \(m_S(A_t).\)
For every convex function \(G\) such that \(G'(0)>-\infty\)
we have
\[
	G(u)=G(0)+G'(0) u + \int_{0}^{\infty}\left( u-t \right) _+\, dG'(t),
\]
therefore,
\begin{gather*}
	\int_{\mathbb{R}^{n}}^{}G\left( u(x) \right) \, dm_S(x)\\
	=G(0) m_S(\mathbb{R}^n)+ \int_{\mathbb{R}^{n}}^{}u(x)\, dm_S(x) \cdot G'(0) + \int_{0}^{\infty}\left( \int_{\mathbb{R}^{n}}^{}\left( u(x)-t \right)_+ \, dm_S(x)  \right) \, dG'(t)\\
	\leq G(0) m_S(\mathbb{R}^n)+   \int_{\mathbb{R}^{n}}^{}\mathcal{W}_n^{\alpha}(x)\, dm_S(x) \cdot G'(0)+ \int_{0}^{\infty}\left( \int_{\mathbb{R}^{n}}^{}\left( \mathcal{W}_n^\alpha(x)-t \right) _+\, dm_S(x)  \right) \, dG'(t)\\
	= \int_{\mathbb{R}^{n}}^{}G(\mathcal{W}_n^{\alpha}(x))\, dm_S(x),
\end{gather*}
where we used the fact that a convex function \(G\) has a derivative in almost every point and that \(dG'\) is a positive measure. If \(G'(0)= -\infty\) then, by the above, we have
\[
	\int_{\mathbb{R}^{n}}^{}G_\epsilon\left( u(x) \right) \, dm_S(x) \leq \int_{\mathbb{R}^{n}}^{}G_{\epsilon}\left( \mathcal{W}_n^{\alpha}(x) \right) \, dm_S(x),
\]
for \(G_\epsilon\left( u \right) = \max \{ G(u),G(0)-\frac{u}{\epsilon}\}\),  whose derivative is bounded almost everywhere. Since this family converges monotonically to \(G\) as \(\epsilon \to 0\) we have the proof in this case.\hfill\(\qed\)
\section{Proof of Theorem \ref{tm:stability-of-concentration-operator}}
\begin{proof}
	Let \(s_0 = m_S(\Omega), u=|f|^2 \mathcal{W}_n^\alpha\) and \(v=\mathcal{W}_n^\alpha.\) Assume for now that \(\limsup_{|x| \to \infty}u(x)=0.\) Notice that since \(\rho(t) = m_S(\{u > t\})\) we have \(s=m_S(\{u > u^{*}(s)\})\) and \(\int_{0}^{s_0}v^{*}(s)\, ds =\int_{\{v>v^{*}(s_0 )\}}^{}v(x)\, dm_S(s).  \)
	Also, denote \(\delta_{s_0 }= 1- \frac{\int_{0}^{s_0 }u^*(s)\, ds }{\int_{0}^{s_0 }v^*(s)\, ds }.\) Since \(\int_{\Omega}^{}u(x)\, dx \leq \int_{\{u>u^*(s_0 )\}}^{}u(x)\, dx,  \) we have  \(\delta_{s_0 }\leq \delta\left( f;\Omega,\alpha \right). \)
	Using the same idea from \cite{GomezKalajMelentijevicRamos2024,GomesAndreRamosTilli} one can show that
	\begin{equation}\label{eq:stability-bound-from-above}
		\int_{0}^{s^*}\left( v^{*}(s)-u^{*}(s) \right) \, ds
		\leq
		\begin{cases}
			\delta_{s_0 } \cdot \frac{\int_{0}^{s_0}v^{*}(s)\, ds }{1- \int_{0}^{s_0}v^{*}(s)\, ds }, & s_0  \geq s^{*} \\
			\delta_{s_0 }                                                                             & s_0 <  s^{*}
		\end{cases}
		\leq  \delta_{s_0} \cdot F(s_0)
	\end{equation}
	where \(s^{*}\) is the minimal solution of the equation \(v^{*}(s)=u^{*}(s)\) and
	\[
		F(s_0)= \max\left\{ 1  , \frac{\int_{0}^{s_0}v^{*}(s)\, ds }{1- \int_{0}^{s_0}v^{*}(s)\, ds } \right\}.
	\]
	Indeed, denote \(r(s)=\frac{u^{*}(s)}{v^{*}(s)},\) for \(s \in (0,m_S(\mathbb{R}^{n})).\) Notice that by Remark \ref{rm:inverse-functions-differentiability-identities}
	\[
		r'(s)=\frac{(u^{*})'(s) v^{*}(s)-(v^{*})'(s)u^{*}(s)}{\left( v^*(s) \right) ^2 } \geq \frac{-\alpha \Theta(s)u^{*}(s) v^{*}(s)+\alpha \Theta(s)v^{*}(s)u^{*}(s) }{\left( v^{*}(s)    \right)^2 }= 0,
	\]
	hence, \(r\) is an increasing function. We will consider two cases.
	\paragraph{\textsc{Case 1: \(s_0 >s^*\)}} Since \(r(s_0 )\geq r(s^*)= 1\) we have
	\begin{align*}
		\int_{0}^{s_0 }\left( v^*(s)-u^*(s) \right) \, ds & = \int_{s_0 }^{m_S(\mathbb{R}^n)}\left( u^*(s)-v^*(s) \right) \, ds = \int_{s_0 }^{m_S(\mathbb{R}^n)}u^*(s) \left( 1-\frac{1}{r(s)} \right)\, ds \\
		                                                  & \geq \left( 1-\frac{1}{r(s_0 )} \right) \int_{s_0 }^{m_S(\mathbb{R}^n)}u^*(s)\, ds.
	\end{align*}
	By the same argument, we have
	\[
		\int_{s^*}^{s_0 }(u^*(s)-v^*(s))\, ds \leq \left( 1-\frac{1}{r(s_0 )} \right) \int_{s^*}^{s_0 }u^*(s)\, ds.
	\]
	Using the previous inequalities we conclude
	\begin{align*}
		\int_{0}^{s^*}\left( v^*(s)-u^*(s) \right) \, ds & \leq \int_{0}^{s_0 }\left( v^*(s)-u^*(s) \right) \, ds + \int_{s^*}^{s_0 }\left( u^*(s)-v^*(s) \right) \, ds                                                                                                                                                     \\
		                                                 & \leq \int_{0}^{s_0 }\left( v^*(s)-u^*(s) \right) \, ds + \int_{s^*}^{s_0 }u^*(s)\, ds \cdot \frac{\int_{0}^{s_0 }\left( v^*(s)-u^*(s) \right) \, ds }{\int_{s_0 }^{m_S(\mathbb{R}^n)}u^*(s)\, ds }                                                               \\
		                                                 & \leq \frac{\int_{0}^{s_0 }\left( v^*(s)-u^*(s) \right) \, ds}{\int_{0}^{s_0 }v^*(s)\, ds } \cdot \left( \int_{0}^{s_0 }v^*(s)\, ds  +\int_{s^*}^{s_0 }u^*(s)\, ds \cdot \frac{\int_{0}^{s_0 }v^*(s)\, ds }{\int_{s_0 }^{m_S(\mathbb{R}^n)}u^*(s)\, ds  } \right) \\
		                                                 & \leq  \delta_{s_0 }  \int_{0}^{s_0 }v^*(s)\, ds \cdot \frac{\int_{s^*}^{m_S(\mathbb{R}^n)}u^*(s)\, ds }{\int_{s_0 }^{m_S(\mathbb{R}^n)}u^*(s)\, ds } \leq \delta_{s_0 }\int_{0}^{s_0 }v^*(s)\, ds \cdot \frac{1}{\int_{s_0 }^{m_S(\mathbb{R}^n)}v^*(s)\, ds},
	\end{align*}
	where the last inequality follows since \(u^*(s)\geq v^*(s)\) for \(s \geq s^*.\)
	\paragraph{\textsc{Case 2: \(s_0\leq s^*\)}} Notice that \(r(s_0 )\leq r(s^*)=1,\) therefore
	\[
		\int_{0}^{s_0 }\left( v^{*}(s)-u^*(s) \right) \, ds = \int_{0}^{s_0 }v^{*}(s)(1-r(s))\, ds \geq (1-r(s_0 ))\int_{0}^{s_0 }v^{*}(s)\, ds.
	\]
	By the same argument, we have
	\[
		\int_{s_0 }^{s^{*}}\left( v^{*}(s)-u^{*}(s) \right) 	\, ds \leq (1-r(s_0 ))\int_{s_0 }^{s^*}v^{*}(s)	\, ds.
	\]
	Using the previous inequalities we obtain
	\begin{align*}
		\int_{0}^{s^*}\left( v^{*}(s)-u^*(s) \right) \, ds & = \int_{0}^{s_0 }\left( v^*(s)-u^*(s) \right) \, ds + \int_{s_0 }^{s^*}\left( v^*(s)-u^*(s) \right) \, ds                                                                          \\
		                                                   & \leq  \int_{0}^{s_0 }\left( v^*(s)-u^*(s) \right)\, ds + \int_{s_0 }^{s^*}v^*(s)\, ds \cdot \frac{\int_{0}^{s_0 }\left( v^*(s)-u^*(s) \right) \, ds }{\int_{0}^{s_0 }v^*(s)\, ds } \\
		                                                   & =\int_{0}^{s_0 }\left( v^*(s)-u^*(s) \right) \, ds \cdot \frac{\int_{0}^{s^*}v^*(s)\, ds }{\int_{0}^{s_0 }v^*(s)\, ds }  \leq \delta_{s_0 }
	\end{align*}
	Notice that
	\[
		\begin{split}
			\int_{0}^{s^{*}}\left( v^{*}(s)-u^{*}(s) \right)  \, ds 
			 & = \int_{t^*}^{1}\left( \rho_0 (\tau)-\rho(\tau) \right) \, d\tau
		\end{split}
	\]
	where \(v^{*}(s^*)=t^*=u^{*}(s^{*}).\)
	By the discussion at the end of the proof of Lemma \ref{lm:the-faber-krahn-inequalities-lemma} we have
	\begin{align*}
		\rho (t) \leq  \rho_0 (t), \text{ for } t \geq t^* \quad \text{and} \quad
		\rho (t) \geq  \rho_0 (t), \text{ for } t \leq  t^*
	\end{align*}
	therefore,
	\[
		\int_{t^*}^{1}\left( \rho_0 (\tau)-\rho(\tau) \right) \, d\tau\geq \int_{T}^{1}\left( \rho_0(\tau)-\rho(\tau) \right) \, d\tau      = \int_{T}^{1}\rho_0 (\tau)\, d\tau = \phi(T).
	\]
	From the identity \(\rho_0 '(t)=-\frac{1}{\alpha t \Theta(\rho_0(t))}\) we can easily conclude that \(\rho_0 \) is \(C^{\infty}\) function on \((0,1)\), and therefore, \(\phi \in C^{\infty}(0,1).\) Notice that \(\phi'(t)=-\rho_0 (t), \phi''(t)=-\rho_0 '(t)= \frac{1}{\alpha t \Theta(\rho_0 (t))},\)

	Our main concern will be to determine the rate at which \(\phi(T)\) decreases as \(T \) goes to \(1\) from below. In order to obtain this, we will examine the expression \(\frac{1}{\alpha t \Theta(\rho_0 (t))},\) when \(t\) goes to \(1.\) Notice that
	\begin{equation}\label{eq:the-order-of-rho0}
		\rho_0 (t)=m_S \{ \mathcal{W}_n^{\alpha}(x) >t\} = \int_{|x|<r}^{}\, dm_S(x)=2^n \omega_{n-1}\int_{0}^{r}\frac{\tau^{n-1}}{\left( 1+\tau^2 \right) ^{n}}\, d\tau,
	\end{equation}
	where \(r  >0	\) such that \(\mathcal{W}_n^{\alpha}(r)=t.\) Notice that with the previous constraint we have  \(t \to 1- \Leftrightarrow r \to 0+.\) For such \(r\) we have  \(\int_{0}^{r}k_{-1}(\tau)\, d\tau = \frac{\log t}{\alpha}. \) Since, by \eqref{eq:the-submean-value-inequality}, we have \(k_{-1}(r) \sim -\frac{4r}{n},\) as \(r \to 0+,\) we obtain \(\frac{\log t}{\alpha}\sim -\frac{2r^2}{n}.\) Therefore, we find \(t \sim e^{2 \alpha r^2/n}\sim 1-\frac{2\alpha r^2}{n},\) hence \(r \sim \left[\frac{n}{2\alpha} \left( 1-t \right)  \right]^{1/2} \) as \(t \to 1.\) Combining with \eqref{eq:the-order-of-rho0} we obtain
	\[
		\rho(t) \sim 2^{n}\omega_{n-1} r^{n} \sim 2^{n}\omega_{n-1}\left[ \frac{n}{2\alpha}(1-t) \right] ^{n/2},
	\]
	as \(t \to 1-.\)
	Let \(V(r)\) be the spherical volume of the ball in \(\mathbb{R}^{n}\) with Euclidian radius \(r,\) i.e.
	\[
		V(r)=2^{n}\omega_{n-1}\int_{0}^{r}\frac{\tau^{n-1}}{(1+\tau^2)^n}\, d\tau,
	\]
	where \(\omega_{n-1}\) is the surface area of the \(\mathbb{S}^{n-1}.\)
	Consequently (see also \eqref{eq:hausdorff-measure-of-the-boundary-of-set}),
	\[
		\Theta(V(r))=\frac{2^{n}\omega_{n-1}\int_{0}^{r}\frac{\tau ^{n-1}}{(1+t^2)^n}\, d\tau }{\left[ \omega_{n-1}\frac{r^{n-1}2^{n-1}}{(1+r^2)^{n-1}} \right]^2 }\sim \frac{r^{2-n}}{2^{n-2}\omega_{n-1}}, \quad r \to 0+,
	\]
	Since \(V(r)\sim 2^{n}\omega_{n-1}r^{n},\) we conclude that \(\Theta(s) \sim \frac{\left[ V^{-1}(s) \right] ^{2-n}}{2^{n-2}\omega_{n-1}} \sim c_n s^{\frac{2-n}{n}},\) when \(s \to 0+.\)
	Finally we obtain
	\[
		\phi''(t)=\frac{1}{\alpha t \Theta(\rho_0 (t))} \sim \frac{1}{\alpha t \Theta\left( 2^{n}\omega_{n-1}\left[ \frac{n}{2\alpha}(1-t) \right] ^{n/2} \right) } \sim c_{n,\alpha} \left( 1-t \right) ^{\frac{n}{2}-1}, t \to 1-.
	\]
	Therefore, \(\lim_{T \to 1-} \frac{\phi(T)}{(1-T)^{\frac{n}{2}+1}}=4\frac{c_{n,\alpha}}{n(n+2)} \neq 0,\) which implies that \(\phi(T) \geq C (1-T)^{\frac{n}{2}+1},\) for all \(T \in [0,1].\)

	Notice that  \(L^2\left( \mathbb{R}^{n},m_S \right) \) is a Hilbert space with the scalar product given by
	\[
		\langle g_1 ,g_2  \rangle=\frac{1}{c(\alpha)} \int_{\mathbb{R}^{n}}^{}g_1 (x)\overline{g_2(x)} \mathcal{W}_n^{\alpha}(x)\, dm_S(x).
	\]
	Since \(\left\lVert f \right\rVert _{\alpha,2}=\left\lVert I_{x_0 } \right\rVert _{\alpha,2}=1\) we have
	\begin{align*}
		\left\lVert |f|-I_{x_0 } \right\rVert ^2  = 2 - 2 \langle |f|,I_{x_0 } \rangle
		 & =2- \frac{2}{c(\alpha)} \int_{\mathbb{R}^{n}}^{}\left| f(x)\right|  \frac{\mathcal{W}_n^{\alpha/2}\left( \varphi_{x_0 }(x) \right) }{\mathcal{W}_n^{\alpha/2}(x)}\mathcal{W}_n^{\alpha}(x)\, dm_S(x) \\
		 & \leq  2- 2 \left| f(y) \right| \mathcal{W}_n^{\alpha/2}\left( \varphi_{x_0 }(y) \right) \mathcal{W}_n^{\alpha/2}(y),
	\end{align*}
	where the last inequality holds for every \(y \in \mathbb{R}^{n}\) by Theorem \ref{tm:point-evaluation-theorem}. Taking \(y=x_0 \) we obtain \(\inf_{x_0 \in \mathbb{R}^{n}}\left\lVert |f|-I_{x_0 } \right\rVert \leq 2-2 \sup_{x_0 \in \mathbb{R}^{n}} |f(x_0 )| \mathcal{W}_n^{\alpha/2}(x)=2\left(1-\sqrt{T} \right)\leq 2(1-T),\) where \(T=\sup_{x \in \mathbb{R}^{n}}|f(x)|^p \mathcal{W}_n^{\alpha}(x).\)

	Combining the previous results, we have \(\inf_{x_0 \in \mathbb{R}^{n}}\left\lVert |f|-I_{x_0 } \right\rVert \leq 2(1-T)\leq C \left[ \phi(T) \right] ^{\frac{2}{n+2}} \leq C F(s_{0})^{\frac{2}{n+2}} \delta_{s_0 }^{\frac{2}{n+2}}. \)

	If \(\limsup_{|x|\to \infty}u(x)< \infty,\) then for every \(\epsilon >0, \) we have
	\[
		2(1-T_\epsilon) \leq  C \delta\left( f; \Omega,\alpha+\epsilon \right) ^{\frac{2}{n+2}}.
	\]
	holds, where \(T_\epsilon=\sup_x |f(x)|^2 \mathcal{W}_n^{\alpha+\epsilon}(x).\)
	holds. Letting \(\epsilon \to 0+,\) we obtain the desired inequality.
\end{proof}
\section{Proof of Theorem \ref{tm:stability-convex-functional}}
\begin{proof}
	Notice that by Theorem \ref{tm:point-evaluation-theorem} we have \(T\leq 1.\)
	Denote \(v=\mathcal{W}_n^{\alpha}\) and \(u=|f|^{p}\mathcal{W}_n^{\alpha}.\) We will follow the arguments from \cite{arXiv:2412.10940}. First, we decompose \(G= G_1+G_2\), where
	\[
		G_1(t)=\left\{\begin{array}{l l}
			G(t),                           & 0<t\leq T,     \\
			G_-'(T)\left( t-T \right)+G(T), & T\leq t\leq 1.
		\end{array}\right.
	\]
	Note that \(G_1\) is convex itself. Therefore, we have
	\begin{align*}
		     & \int_{\mathbb{R}^{n}}^{}G(v)\, dm_S(x)- \int_{\mathbb{R}^{n}}^{}G(u)\, dm_S                                                                                  \\
		=    & \int_{\mathbb{R}^{n}}^{}G_1(v)\, dm_S-\int_{\mathbb{R}^{n}}^{}G_1(u)\, dm_S + \int_{\mathbb{R}^{n}}^{}G_2(v)\, dm_S  - \int_{\mathbb{R}^{n}}^{}G_2(u)\, dm_S \\
		\geq & \int_{\mathbb{R}^{n}}^{}G_2(v)\, dm_S - \int_{\mathbb{R}^{n}}^{}G_2 (u)\, dm_S                                                                               \\
		=    & \int_{T}^{1}\left( G'(t)-G'_-(T) \right) \rho_0(t)\, dt,
	\end{align*}
	where the last equality holds because \(\rho(t)=0\) for \(t \geq T.\)
\end{proof}
\section{Appendix}\label{sec:appendix}
In this section, we prove that the point evaluation is continuous for log-subharmonic functions in the Bergman space on the unit sphere, the same as in the case of the Bergman spaces on the hyperbolic ball. We also give the optimal constant.

In the hyperbolic ball the M\"obius map \(\varphi_z\) is the map with properties: \(\varphi_z(0)=z\) and \(\varphi_z^{-1}=\varphi_z.\) Here we define maps with the same properties.
\begin{lemma}
	For every \(\xi \in \mathbb{S}^{n}\) there exists symmetric orthogonal matrix \(A\) such that \(Ae_{n+1}=\xi.\)
\end{lemma}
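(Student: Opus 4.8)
The plan is to produce $A$ explicitly as a Householder-type reflection, for which symmetry and orthogonality are immediate. Recall that for any nonzero $v \in \mathbb{R}^{n+1}$ the matrix
\[
	H_v = I_{n+1} - \frac{2}{\lVert v \rVert^2}\, v v^\top
\]
is symmetric, since $(vv^\top)^\top = vv^\top$, and satisfies $H_v^2 = I_{n+1} - \frac{4}{\lVert v\rVert^2} vv^\top + \frac{4}{\lVert v\rVert^4} v(v^\top v) v^\top = I_{n+1}$; hence $H_v^\top H_v = H_v^2 = I_{n+1}$, so $H_v$ is also orthogonal (in fact an involution). Thus it suffices to choose $v$ so that $H_v e_{n+1} = \xi$.

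First I would handle the trivial case $\xi = e_{n+1}$ by taking $A = I_{n+1}$. In the remaining cases set $v = e_{n+1} - \xi \neq 0$ and let $A = H_v$. Writing $\xi_{n+1} = \langle e_{n+1}, \xi\rangle$, one computes $\lVert v\rVert^2 = 1 - 2\xi_{n+1} + 1 = 2(1 - \xi_{n+1})$ and $\langle v, e_{n+1}\rangle = 1 - \xi_{n+1}$, and therefore
\[
	A e_{n+1} = e_{n+1} - \frac{2\langle v, e_{n+1}\rangle}{\lVert v\rVert^2}\, v = e_{n+1} - v = \xi,
\]
as desired. Note that the only genuinely excluded point is $\xi = e_{n+1}$: for $\xi = -e_{n+1}$ one still has $v = 2 e_{n+1} \neq 0$, with $A = \mathrm{diag}(1, \dots, 1, -1)$.

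I do not expect any real obstacle in this argument — the sole point needing attention is the degeneracy $v = 0$ at $\xi = e_{n+1}$, which is why that case is separated out. For the use made of this lemma afterwards, it is worth recording that, $A$ being a symmetric orthogonal matrix, the map $x \mapsto Ax$ restricts to an isometry $\psi_\xi$ of $\mathbb{S}^n$ with $\psi_\xi \circ \psi_\xi = \mathrm{Id}_{\mathbb{S}^n}$ and $\psi_\xi(e_{n+1}) = \xi$, exactly the properties invoked in Section 1 to define $\varphi_{x_0}$.
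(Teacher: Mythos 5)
Your Householder-reflection construction is correct: $H_v = I_{n+1} - \tfrac{2}{\lVert v\rVert^2}vv^\top$ is manifestly symmetric and involutory, the computation $H_v e_{n+1} = \xi$ for $v = e_{n+1} - \xi$ checks out, and the degenerate case $\xi = e_{n+1}$ is properly split off. The paper leaves this lemma as an exercise, so there is no authored proof to compare against; yours is the standard and, I would say, intended argument.
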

\begin{proof} Left as an easy exercise.
\end{proof}
Let us define \(\psi_\xi x=Ax,\) where the matrix \(A\) is given by the previous lemma. Then we have  \(\psi_\xi(e_{n+1})=\xi\) and \(\psi_{\xi}^{-1}= \psi_\xi.\) Also, for every \(x_0 \in \mathbb{R}^{n}\) we will use the notation \(\varphi_{x_0 } = S^{-1}\circ \psi_\xi \circ S,\) where \(\xi=S(x_0 ).\)

Let \(\psi : \mathbb{S}^{n}\to \mathbb{S} ^{ n}\) be given by \(\psi(x)=Ax\) for a symmetric, orthogonal \((n+1) \times (n+1)\) matrix $A$ with the positive determinant. Denote \(dV=dx_1 \wedge dx_2 \wedge \dots \wedge dx_{n+1}\) and \(dV_S= \iota_N dV,\) where \(N=\sum_{k=1}^{n+1} x_k \frac{\partial }{\partial x_k},\) standard Riemannian volume form on \(\mathbb{S}^{n}.\) By using \cite[Lemma 9.11]{JohnMLee2002}, or by  a straightforward calculation one can show that \(\psi^* dV=dV. \) Therefore, for \(X_2 ,X_3 ,\dots ,X_{n+1} \in T_p\mathbb{S}^{n}\) we have
\begin{equation*}
	\psi^{*}\iota_{N_{\psi(p)}dV_{\psi(p)}}= \iota_{\psi_{*}^{-1}N_{\psi(p)}}\psi^*dV_{\psi(p)}=\iota_{N_p}dV_p.
\end{equation*}
Hence, we have \(\psi^{*}dV_S=dV_S\) and
\[
	\int_{\mathbb{S}^{n}}^{}f \, dV_S = \int_{\mathbb{S}^{n}}^{}f\, \psi^{*}\left( dV_S \right) = \int_{\mathbb{S}^{n}}^{}\psi^{*}\left( f \circ \psi\,  dV_S \right) = \int_{\mathbb{S}^{n}}f\circ \psi\, dV_S
\]
for every smooth function \(f : \mathbb{S}^{n} \to \mathbb{R},\) see \cite[Proposition 10.20 d)]{JohnMLee2002}.

In case of a negatively oriented matrix \(A\) we have \(\psi^{*}dV_S=-\psi^{*}dV_S.\) It can be proved as in the previous case, taking into account that \(\psi_* N_p=N_{\psi(p)}\). Hence
\[
	\int_{\mathbb{S}^{n}}^{}f \, dV_S = -\int_{\mathbb{S}^{n}}^{}f\, \psi^{*}\left( dV_S \right) = -\int_{\mathbb{S}^{n}}^{}\psi^{*}\left( f \circ \psi\,  dV_S \right) = \int_{\mathbb{S}^{n}}f\circ \psi\, dV_S .
\]
If \(S: \mathbb{S}^{n}\to \mathbb{R}^{n}\) is the stereographic projection, we get
\[
	\int_{\mathbb{R}^{n}}^{}f\left( \psi\left( S^{-1}(x) \right)  \right) \, \frac{2^{n}dx}{\left( 1+x^2 \right) ^{n}}= \int_{\mathbb{R}^{n}}^{}f\left( S^{-1} (x)\right) \, \frac{2^{n}dx}{\left( 1+x^2 \right) ^{n}},
\]
for every \(\psi\) satisfying \(\psi \circ \psi = Id.\)
Let \(\varphi=S \circ \psi \circ S^{-1}.\) Then \(S^{-1}\circ \varphi = \psi \circ S^{-1},\) therefore, we have
\[
	\int_{\mathbb{R}^{n}}^{}f\left( S^{-1}\left( \varphi(x) \right)  \right) \, \frac{2^{n}\, dx}{\left( 1+x^2 \right) ^{n}} = \int_{\mathbb{R}^{n}}^{}f\left( S^{-1}(x) \right) \, \frac{2^{n}\, dx}{\left( 1+x^2 \right) ^{n}}.
\]
The change of variable formula and the identity \(\varphi^2=\mathrm{Id}\) gives us
\[
	\int_{\mathbb{R}^{n}}^{}f\left( S^{-1}(x) \right) \left| \det \varphi'(x) \right| \, \frac{2^{n}\, dx}{\left( 1+\left| \varphi(x) \right| ^2 \right) ^{n} } = \int_{\mathbb{R}^{n}}^{}f\left( S^{-1}(x) \right) \, \frac{2^{n}\, dx}{\left( 1+x^2 \right) ^{n}}.
\]
Since this holds for every smooth function \(f:\mathbb{S}^{n}\to \mathbb{R}\) we have
\[
	\left| \det \varphi'(x) \right| =\frac{\left( 1+\left| \varphi(x) \right| ^2 \right) ^{n}}{\left( 1+|x|^2 \right) ^{n}},
\]
for every \(x \in \mathbb{R}^{n}\) where \(\varphi \) is defined (which is \(\mathbb{R}^{n}\setminus \{a\}\) for some \(a\)).

\subsection{Submean-value property of \texorpdfstring{\(\Delta_S\)}{DeltaS}-subharmonic functions}\label{subsec:mean-value property}
Here, we will prove the mean-value property of \(\Delta_S\)-subharmonic functions. For a function \(f\) on \(\mathbb{R}^{n}\) we say that is radial if \(f(x)=g(|x|)\) for some function \(g\) defined on \([0,\infty).\) For a continuous function we define radialization \(f^{\#}\) of \(f\) by
\[
	f^{\#}(x)=\int_{\mathbb{S}^{n-1}}^{}f(|x|\zeta)\, d\sigma_{n-1}(\zeta)
\]
where \(\sigma_{n-1}\) is the normalized measure on the unit sphere \(\mathbb{S}^{n-1}.\)  By \(\omega_n\) we denote the volume of the unit ball in \(\mathbb{R}^{n}.\) Note that the surface area of \(\mathbb{S}^{n-1}\) is then equal to \(n \omega_n.\)

\begin{lemma}
	If \(f \in C^{2}(\mathbb{B}_R),\) then for all \(0<r<R\) we have
	\begin{enumerate}
		\item[a)] \(\frac{d}{dr}\int_{\mathbb{S}^{n-1}}^{}f(r \zeta)\, d \sigma_{n-1}(\zeta) = \frac{4}{n \omega_n}r^{1-n}\left( 1+r^2 \right) ^{n-2}\int_{B_r}^{}\Delta_S f(x) \, \frac{dx}{\left( 1+x^2 \right) ^{n}}; \)
		\item[b)] \(f(0)=\int_{\mathbb{S}^{n-1}}^{}f(r \zeta)\, d \sigma_{n-1}(\zeta) - \int_{B_r}^{}  g(|x|,r)\Delta_S f(x)\, \frac{dx}{\left( 1+x^2 \right) ^{n}}, \)\\
		      where \(g(|x|,r)=\frac{4}{n \omega_n}\int_{|x|}^{r}\frac{(1+s ^2)^{n-2}}{s^{n-1}}\, ds.\)
	\end{enumerate}
\end{lemma}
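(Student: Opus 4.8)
The plan is to reduce both identities to the classical theory of spherical means, exploiting that $\Delta_S$ has a divergence form with respect to the flat structure. The preliminary remark I would use is that, expanding the coordinate expression for $\Delta_S$ (equivalently, recalling the identity already exploited in the proof of Theorem \ref{tm:the-monotonicity-theorem}), one has
\[
	\operatorname{div}\!\left(\frac{\nabla f(x)}{\left(1+|x|^2\right)^{n-2}}\right) = \frac{4}{\left(1+|x|^2\right)^{n}}\,\Delta_S f(x)
\]
for every $f\in C^2$. This is the engine that turns integrals of $\Delta_S f$ against the weight into boundary integrals.

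For part a), I would set $M(r)=\int_{\mathbb{S}^{n-1}}f(r\zeta)\,d\sigma_{n-1}(\zeta)$ and differentiate under the integral sign, getting $M'(r)=\int_{\mathbb{S}^{n-1}}\partial_r f(r\zeta)\,d\sigma_{n-1}(\zeta)$. Since $\sigma_{n-1}$ is the normalized surface measure and $\mathcal{H}^{n-1}(\mathbb{S}^{n-1})=n\omega_n$, and since on a sphere centred at the origin the radial derivative coincides with the outward normal derivative $\partial_\nu f$, this equals $\frac{1}{n\omega_n r^{n-1}}\int_{\partial B_r}\partial_\nu f\,d\mathcal{H}^{n-1}$. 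On the other hand, integrating the displayed identity over $B_r$, applying the Euclidean divergence theorem, and using that $(1+|x|^2)^{n-2}$ is constant on $\partial B_r$ gives
\[
	\int_{B_r}\frac{4\,\Delta_S f(x)}{\left(1+|x|^2\right)^n}\,dx=\frac{1}{\left(1+r^2\right)^{n-2}}\int_{\partial B_r}\partial_\nu f\,d\mathcal{H}^{n-1}.
\]
Eliminating $\int_{\partial B_r}\partial_\nu f\,d\mathcal{H}^{n-1}$ between the two expressions yields exactly the formula in a).

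For part b), I would integrate the formula from a) in $r$ over the interval $(0,r)$, note that $M(0)=f(0)$, and interchange the order of integration over the region $\{(s,x):|x|<s<r\}$; the inner $s$-integral is precisely $g(|x|,r)=\frac{4}{n\omega_n}\int_{|x|}^{r}\frac{(1+s^2)^{n-2}}{s^{n-1}}\,ds$, and a rearrangement gives the claimed identity. The only point needing attention is the behaviour near the origin, namely justifying differentiation under the integral for small $r$ and the use of Fubini's theorem. Both are routine: $f\in C^2(\mathbb{B}_R)$ makes $\Delta_S f$ bounded on $\overline{B_r}$, while $g(\cdot,r)$ has only the singularity of order $|x|^{2-n}$ at the origin (replaced by $\log\frac{1}{|x|}$ when $n=2$), which is locally integrable in dimension $n$. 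So I do not expect a genuine obstacle; the work is entirely in the bookkeeping of the two integration-by-parts identities above.
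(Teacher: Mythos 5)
Your proof is correct, and it takes a genuinely different route from the paper's. The paper first reduces to radial functions via the commutation identity $(\Delta_S f)^{\#}=\Delta_S(f^{\#})$, then carries out an explicit change of variable $f(x)=u(r^2)$ followed by the substitution $v(t)=t^{n/2}(1+t)^{2-n}u'(t)$ so that $\Delta_S f(x)=r^{2-n}(1+r^2)^{n}v'(r^2)$, from which the integral over $B_\rho$ telescopes; part b) is then obtained by integrating a). You instead work directly with the divergence-form identity
\[
	\operatorname{div}\!\left(\frac{\nabla f(x)}{(1+|x|^2)^{n-2}}\right)=\frac{4}{(1+|x|^2)^{n}}\Delta_S f(x),
\]
apply the Euclidean divergence theorem on $B_r$ (where the weight $(1+|x|^2)^{n-2}$ is constant on the boundary), and eliminate $\int_{\partial B_r}\partial_\nu f\,d\mathcal{H}^{n-1}$ against the standard expression for $M'(r)$; part b) then follows by integrating in $r$ and swapping the order of integration. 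Both arguments are sound; yours buys you independence from the radialization step and ties the lemma visibly to the same divergence identity the paper uses in the proof of Theorem \ref{tm:the-monotonicity-theorem}, making the two passages feel like a single mechanism, whereas the paper's computation is self-contained but somewhat opaque. Your remark on the integrability of $g(\cdot,r)$ near the origin (of order $|x|^{2-n}$, or logarithmic when $n=2$) is the right thing to flag to justify Fubini, and it holds.
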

\begin{proof}
	Since \(\left( \Delta_S f \right) ^{\#}= \Delta_S \left( f^{\#} \right) \) it is enough to prove a) for radial functions.
	Notice that for \(f(x)=u(r^2)\) we have
	\[
		\Delta_S f(x)=r^2 \left( 1+r^2 \right) ^{2}u''(r^2)+\frac{n}{2}u'(r^2)\left( 1+r^2 \right) ^2 + (2-n)(1+r^2) u'(r) r^2.
	\]
	Also, if we let \(v(t)=t ^{\frac{n}{2}}\left( 1+t \right) ^{2-n}u'(t)\) we have \(\Delta_S f(x)=r^{2-n}\left( 1+r^2 \right) ^{n}v'(r^2).\) Therefore,
	\begin{align*}
		\int_{B_\rho}^{}\Delta_S f \, dm_S= & 2^{n} n \omega_n\int_{0}^{\rho}r^{n-1}\Delta_S f(r)\,   \frac{dr}{\left( 1+r^2 \right) ^{n}}
		=                                    2^{n}n \omega_n\int_{0}^{\rho}v'(r^2)r\, dr                                                   \\
		=                                   & 2^{n-1}n \omega_n \rho^{n}(1+\rho^2)^{2-n} u'(\rho^2).
	\end{align*}
	On the other side, we have \(\frac{d}{d \rho} \int_{\mathbb{S}^{n}}^{}f(\rho \xi)\, d \sigma(\xi) = 2 \rho u'(\rho^2).\) Integrating the equality from a) we get b).
\end{proof}
\begin{corollary}\label{tm:the-mean-value-property-theorem}
	Let \(U \subset \mathbb{R}^{n}\) be an open set. A real-valued \(C^{2}\) function is subharmonic with respect to \(\Delta_S\) on \(U\) if and only if  \(\Delta_S f \geq 0\) on \(U.\) Also, \(f \in C^{2}(U)\) is \(\Delta_S\)-harmonic if and only if \(\Delta_Sf=0\) on \(U.\)
\end{corollary}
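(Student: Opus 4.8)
The plan is to derive both equivalences directly from the two-part Lemma just proved, after using the isometries $\varphi_{x_0}$ to move every local computation to the base point $0$.

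\textbf{Reduction to the origin.} Fix $x_0\in U$. Since $\varphi_{x_0}=S^{-1}\circ\psi_{S(x_0)}\circ S$ is smooth near $0$ with $\varphi_{x_0}(0)=x_0$, the function $h:=f\circ\varphi_{x_0}$ is $C^2$ on some ball $\mathbb{B}_R$. Writing $\tilde f=f\circ S^{-1}$ and invoking the invariance $\Delta_{\mathbb{S}^n}(u\circ\psi)=(\Delta_{\mathbb{S}^n}u)\circ\psi$ for sphere isometries from Subsection~2.2, together with the relation $\widetilde{f\circ\varphi_{x_0}}=\tilde f\circ\psi_{S(x_0)}$, one gets $\Delta_S h=(\Delta_S f)\circ\varphi_{x_0}$ on $\mathbb{B}_R$. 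Moreover the spherical average of $h$ at radius $r$ is $\int_{\mathbb{S}^{n-1}}h(r\zeta)\,d\sigma_{n-1}(\zeta)=\int_{\mathbb{S}^{n-1}}f(\varphi_{x_0}(r\zeta))\,d\sigma_{n-1}(\zeta)$, which is exactly the quantity in \eqref{eq:the-submean-value-inequality}, and $h(0)=f(x_0)$. So it suffices to apply the Lemma to $h$ at the origin.

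\textbf{($\Delta_S f\ge 0\Rightarrow$ subharmonic).} Apply part b) of the Lemma to $h$ at radius $r<R$. The kernel $g(|x|,r)=\frac{4}{n\omega_n}\int_{|x|}^{r}\frac{(1+s^2)^{n-2}}{s^{n-1}}\,ds$ is $\ge 0$ for $|x|<r$, and $\Delta_S h=(\Delta_S f)\circ\varphi_{x_0}\ge 0$, so the subtracted correction integral is non-negative. Hence
\[
f(x_0)=h(0)\le\int_{\mathbb{S}^{n-1}}f(\varphi_{x_0}(r\zeta))\,d\sigma_{n-1}(\zeta)\qquad(0<r<R),
\]
and since $x_0$ was arbitrary, $f$ is $\Delta_S$-subharmonic. \textbf{(Subharmonic $\Rightarrow\Delta_S f\ge 0$).} By contraposition: if $\Delta_S f(x_0)<0$, continuity gives $\Delta_S h<0$ on some $\mathbb{B}_R$, so by part a) the map $r\mapsto\int_{\mathbb{S}^{n-1}}f(\varphi_{x_0}(r\zeta))\,d\sigma_{n-1}(\zeta)$ has strictly negative derivative on $(0,R)$, while its $r\to0^+$ limit equals $h(0)=f(x_0)$; thus this average is $<f(x_0)$ for small $r$, contradicting \eqref{eq:the-submean-value-inequality}. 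Finally, $\Delta_S f\equiv 0$ iff both $f$ and $-f$ have non-negative $\Delta_S$, i.e. iff both are $\Delta_S$-subharmonic; and when $\Delta_S f\equiv 0$, part b) turns the submean-value inequality into the exact mean-value identity, giving $\Delta_S$-harmonicity.

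The only genuinely delicate point is the reduction to the origin: one must verify that $\Delta_S$ is conjugation-invariant under $\varphi_{x_0}$ — this is where Subsection~2.2 and $\varphi_{x_0}=S^{-1}\circ\psi_{S(x_0)}\circ S$ enter — and that $\varphi_{x_0}$ is $C^2$ near $0$ (true, as its single singular point lies away from $0$), so that the Lemma legitimately applies to $f\circ\varphi_{x_0}$ on an honest ball. Everything else reduces to the sign of the kernel $g(\cdot,r)$ and a routine continuity argument for spherical averages.
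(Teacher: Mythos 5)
Your proof is correct and takes exactly the route the paper intends: the paper's own proof is the one-liner ``Follows easily from the previous lemma and $\varphi_{x_0}$ invariance of Laplacian,'' which is precisely the conjugation-by-$\varphi_{x_0}$ reduction to the origin plus an application of parts a) and b) of the mean-value Lemma that you carry out in detail. Your fleshed-out argument (sign of the kernel $g$ for one direction, part a) and continuity for the converse, and $f,-f$ both subharmonic for the harmonic case) is a faithful expansion of that sketch with no gaps.
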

\begin{proof} Follows easily from the previous lemma and  \(\varphi_{x_0}\) invariance of   Laplacian. \qedhere

\end{proof}
\begin{theorem}
	Let \(f\) be a \(\Delta_S\)-subharmonic function on open connected set \(U.\) If \(f\) attains a global maximum on \(U\) then \(f\) is constant. Also, if \(f \in C\left( \overline{U} \right) \) and  \(f \leq 0\) on \(\partial U,\) then \(f \leq 0\) on \(U.\)
\end{theorem}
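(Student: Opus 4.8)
The plan is to prove the strong maximum principle first (if $f$ attains a global maximum then $f$ is constant) and then deduce the boundary version as an immediate consequence. For the strong maximum principle, suppose $f$ attains a global maximum $M=\max_U f$ and put $E=\{x\in U: f(x)=M\}$. This set is nonempty by hypothesis and closed in $U$: since $f$ is upper semicontinuous the sublevel complement $\{f\ge M\}$ is closed, and $\{f\ge M\}=\{f=M\}=E$ because $f\le M$ on $U$. The whole point is to show that $E$ is also open; since $U$ is connected, this forces $E=U$, i.e. $f\equiv M$.

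To show $E$ is open, fix $x_0\in E$. By the submean-value inequality \eqref{eq:the-submean-value-inequality} there is $R>0$, which we may also shrink so that $\varphi_{x_0}(B_R)\subset U$ (possible since $\varphi_{x_0}$ is continuous and $\varphi_{x_0}(0)=x_0\in U$), such that
\[
	M = f(x_0) \leq \int_{\mathbb{S}^{n-1}} f\big(\varphi_{x_0}(r\zeta)\big)\, d\sigma_{n-1}(\zeta) \leq M, \qquad 0<r<R,
\]
the last inequality because $f\le M$ on $U$. Hence for every $r\in(0,R)$ the nonnegative measurable integrand $M-f(\varphi_{x_0}(r\zeta))$ integrates to zero, so $f\circ\varphi_{x_0}=M$ holds $\sigma_{n-1}$-a.e. on each sphere of radius $r<R$; by Fubini and the fact that $\varphi_{x_0}$ is a diffeomorphism off a single point, $f=M$ holds $dm_S$-a.e. on the neighborhood $\varphi_{x_0}(B_R)$ of $x_0$. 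Finally, for any $y\in\varphi_{x_0}(B_R)$ choose $z_k\to y$ with $f(z_k)=M$ (possible because the set $\{f=M\}$ has full measure, hence is dense, in $\varphi_{x_0}(B_R)$); upper semicontinuity gives $M=\limsup_k f(z_k)\le f(y)$, while $f(y)\le M$, so $f(y)=M$. Thus $\varphi_{x_0}(B_R)\subset E$, $E$ is open, and the strong maximum principle follows.

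For the second statement, assume $\overline{U}$ is compact (the case of interest; e.g. $U$ bounded), so that $f\in C(\overline U)$ attains its maximum over $\overline U$ at some point $x^\ast$. If $x^\ast\in\partial U$, then $f(x^\ast)\le 0$ by hypothesis, hence $f\le f(x^\ast)\le 0$ on all of $\overline U$. If $x^\ast\in U$, then $f$ attains a global maximum on $U$, so by the strong maximum principle $f$ is constant on $U$, and by continuity $f\equiv f(x^\ast)$ on $\overline U$; evaluating at any boundary point gives $f(x^\ast)\le 0$ once more. Either way $f\le 0$ on $U$.

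The main obstacle is the step upgrading ``$f=M$ almost everywhere near $x_0$'' to ``$f=M$ everywhere near $x_0$'': unlike the harmonic case, the submean inequality only provides a lower bound for spherical averages, so one cannot read off pointwise equality directly. The resolution, as above, uses nothing more than upper semicontinuity together with the global bound $f\le M$. A secondary point requiring care is that the relevant averages are taken over the $\varphi_{x_0}$-orbits rather than Euclidean spheres; this is precisely what the $\varphi_{x_0}$-invariance of $\Delta_S$ and the definition of $\Delta_S$-subharmonicity are designed to accommodate, and the map $\varphi_{x_0}$ carries small Euclidean balls $B_R$ onto genuine neighborhoods of $x_0$.
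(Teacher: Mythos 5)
Your proof is correct, and it carries out precisely the standard open–closed argument that the paper's one-line proof (``follows from the well known idea for the same statement in the complex plane and standard Laplace operator'') refers to. The plan — show $E=\{f=M\}$ is nonempty, closed (by upper semicontinuity), and open (via the submean-value inequality over the $\varphi_{x_0}$-spheres), then invoke connectedness — is exactly the classical proof transplanted to this setting, and your handling of the isometries $\varphi_{x_0}$ is appropriate.

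Two small remarks. First, the ``a.e. to everywhere'' step can be streamlined: for each fixed $r$, the function $\zeta\mapsto f(\varphi_{x_0}(r\zeta))$ is upper semicontinuous, $\le M$, and has average $M$; if it were $<M$ at some $\zeta_0$, the open set $\{f\circ\varphi_{x_0}(r\cdot)<M\}\subset\mathbb{S}^{n-1}$ would be nonempty, hence of positive $\sigma_{n-1}$-measure, forcing the average strictly below $M$. Thus $f\circ\varphi_{x_0}=M$ on every sphere of radius $r<R$, and the Fubini/density detour is unnecessary. Second, you are right to flag the compactness of $\overline{U}$ needed in the boundary statement: as stated in the paper the hypothesis is implicit (take, e.g., $U=\mathbb{R}^{n}$ and $f\equiv 1$ to see it cannot be dropped in $\mathbb{R}^n$). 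The intended reading is that $U$ is a bounded domain, or equivalently that $\overline{U}$ is taken in $\dot{\mathbb{R}}^n$, under which your argument is complete.
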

The proof follows from the well known idea for the same statement in the complex plane and standard Laplace operator.
\begin{remark}\label{rm:the-dirichlet-problem-on-the-sphere}
	In \cite{Symeonidis2003} it is proven that for every geodesic ball \(B_r(x_0)\subsetneq \mathbb{S}^{n}\) and every \(f \in C\left( \partial B_r(x_0) \right) \) the Dirichlet problem
	\begin{align*}
		\Delta_S u & = 0 \text{ on } B_r(x_0)           \\
		u          & = f \text { on } \partial B_r(x_0)
	\end{align*}
	have a unique solution \(u \in C^2(B_r(x_0)) \cap C\left( \overline{B_r(x_0 )} \right) .\) Also, in the same paper the exact formula for such a function \(u\) is given.
\end{remark}
\begin{theorem}
	Let \(f \in C(U)\) be a \(\Delta_S\)-subharmonic function. Then for every \(x_0 \in  U\) and every \(r >0\) such that \(\varphi_{x_0 }(\mathbb{B}_r(0)) \subset U\) the inequality
	\[
		f(x_0) \leq \int_{\mathbb{S}^{n-1}}^{}f\left( \varphi_{x_0 }\left( r \zeta \right)  \right) \, d\sigma_{n-1}(\zeta).
	\]
\end{theorem}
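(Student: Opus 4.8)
The plan is to dominate $f$ on the geodesic ball $B:=\varphi_{x_0}(\mathbb{B}_r(0))$ by the solution of a Dirichlet problem and then to evaluate that solution at $x_0$ using the mean-value identity for $\Delta_S$-harmonic functions. First I would observe that, since $\varphi_{x_0}$ is the $S$-conjugate of a sphere isometry and $\mathbb{B}_r(0)$ is the stereographic image of a proper geodesic ball centered at $S(0)$, the set $B$ is (the stereographic image of) a proper geodesic ball $B_\rho(S(x_0))\subsetneq\mathbb{S}^{n}$; the hypothesis $\varphi_{x_0}(\mathbb{B}_r(0))\subset U$ in particular forces $B$ to avoid $\infty$, so $\partial B$ is a compact hypersurface inside $U$. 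Hence Remark \ref{rm:the-dirichlet-problem-on-the-sphere} provides $h\in C^{2}(B)\cap C(\overline B)$ with $\Delta_S h=0$ on $B$ and $h=f$ on $\partial B$.

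Next I would show that $f\le h$ on $B$ via the maximum principle stated above, applied to $f-h$. This function is continuous on $\overline B$ and vanishes on $\partial B$, so it only remains to check that $f-h$ is $\Delta_S$-subharmonic on $B$; since $f$ already satisfies the submean-value inequality \eqref{eq:the-submean-value-inequality} near every point, it suffices to prove the matching \emph{equality} for $h$. For $y\in B$ the function $h\circ\varphi_y$ is $C^{2}$ near the origin, and by the $\varphi_y$-invariance of the Laplace--Beltrami operator it satisfies $\Delta_S(h\circ\varphi_y)=(\Delta_Sh)\circ\varphi_y=0$; feeding this into identity b) of the Lemma above (the correction integral drops out) gives $h(y)=\int_{\mathbb{S}^{n-1}}h(\varphi_y(r'\zeta))\,d\sigma_{n-1}(\zeta)$ for all small $r'$. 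Subtracting, $f-h$ is $\Delta_S$-subharmonic, hence $f-h\le 0$ on $B$, and in particular $f(x_0)\le h(x_0)$ because $x_0=\varphi_{x_0}(0)\in B$.

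Finally I would identify $h(x_0)$ with the desired average. Since $\varphi_{x_0}\circ\varphi_{x_0}=\mathrm{Id}$, the function $h\circ\varphi_{x_0}$ is $\Delta_S$-harmonic on $\varphi_{x_0}(B)=\mathbb{B}_r(0)$ and continuous on $\overline{\mathbb{B}_r(0)}$; identity b) of the Lemma gives $(h\circ\varphi_{x_0})(0)=\int_{\mathbb{S}^{n-1}}(h\circ\varphi_{x_0})(r'\zeta)\,d\sigma_{n-1}(\zeta)$ for every $r'<r$, and letting $r'\uparrow r$ with dominated convergence (using continuity of $h$ up to $\partial B$) yields $h(x_0)=\int_{\mathbb{S}^{n-1}}h(\varphi_{x_0}(r\zeta))\,d\sigma_{n-1}(\zeta)=\int_{\mathbb{S}^{n-1}}f(\varphi_{x_0}(r\zeta))\,d\sigma_{n-1}(\zeta)$, the last step because $\varphi_{x_0}(r\zeta)\in\partial B$ where $h=f$. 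Combined with $f(x_0)\le h(x_0)$ this is the claim. The main obstacle is that $f$ is assumed only continuous, so the monotonicity of the spherical average coming from part a) of the Lemma is not directly available for $f$ itself; this is precisely what makes the detour through the solved Dirichlet problem and the maximum principle necessary rather than a one-line differentiation of the spherical average. The remaining care is bookkeeping: confirming that $B$ genuinely is a proper geodesic ball so that Remark \ref{rm:the-dirichlet-problem-on-the-sphere} applies, and the $r'\uparrow r$ passage to the boundary forced by $h$ being a priori only $C^{2}$ on the open ball.
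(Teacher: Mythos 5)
Your proof is correct and follows essentially the same route as the paper: solve the Dirichlet problem on the geodesic ball $\varphi_{x_0}(\mathbb{B}_r(0))$ using Remark \ref{rm:the-dirichlet-problem-on-the-sphere}, compare $f$ with the harmonic solution via the maximum principle, and invoke the mean-value identity for the harmonic function. Your write-up is somewhat more careful than the paper's (notably in verifying that $f-h$ is $\Delta_S$-subharmonic via invariance of $\Delta_S$, and in handling the $r'\uparrow r$ passage since $h$ is a priori only $C^2$ on the open ball), but the underlying idea is identical.
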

\begin{proof}
	Let \(x_0 \in U\) and \(r >0\) such that \(\varphi_{x_0 }(\mathbb{B}_r(0)) \subset U.\) Since \(\varphi_{x_0 }\) is an isometry, \(\varphi_{x_0}(\mathbb{B}_r(0))\) is a geodesic ball around \(x_0.\) Therefore, there exists a harmonic function \(u\) such that \(u = f\) on \(\partial \varphi_{x_0 }(\mathbb{B}_r(0)).\) The function \(f-u\) is \(\Delta_S\)-subharmonic function and since \(f-u \leq 0\) on the boundary we have  \(f\leq  u\) on the \(\varphi_{x_0 }\left( \mathbb{B}_r(0) \right).\)  Using the sub-mean value property of subharmonic function \(u\) we have
	\[
		f(x_0) \leq  u(x_0) \leq  \int_{\mathbb{S}^{n-1}}^{}u(\varphi_{x_0}(r \zeta))\, d\sigma_{n-1}(\zeta) =  \int_{\mathbb{S}^{n-1}}^{}f(\varphi_{x_0}(r \zeta))\, d\sigma_{n-1}(\zeta)
	\]
	where the last equality holds since \(f=u\) on \(\partial \varphi_{x_0 }(\mathbb{B}_r(0)).\)
\end{proof}
\subsection{Continuity of the point evaluation theorem }
\begin{theorem}\label{tm:point-evaluation-theorem}
	For  \(p \geq 1\) and every  \(f \in B_{\alpha,p}\) such that the function \(\log |f|\) is \(\Delta_S\)-subharmonic  and \(\limsup_{|x| \to \infty} |f(x)|^{p}\mathcal{W}_n^{\alpha}(x) < \infty\) the inequality
	\[
		\left| f(x_0 ) \right| ^{p}\mathcal{W}_n^{\alpha}(x_0 )\leq \frac{1}{c(\alpha)}\int_{\mathbb{R}^{n}}^{}\left| f(x) \right|^{p} \mathcal{W}_n^{\alpha}(x)\, dm_S(x),
	\]
	holds for every \(x_0 \in \dot{\mathbb{R}}^{n}.\)
\end{theorem}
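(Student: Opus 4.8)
The plan is to reduce the estimate to the classical point-evaluation argument for subharmonic functions on a geodesic ball, exploiting the $\varphi_{x_0}$-invariance of the measure $dm_S$ together with the submean-value property established in the previous subsection. First I would treat the special point $x_0 = 0$ separately, since there the weight $\mathcal{W}_n$ is already normalized to its maximum. The key inequality to prove is
\[
	|f(0)|^{p}\,\mathcal{W}_n^{\alpha}(0) \le \frac{1}{c(\alpha)}\int_{\mathbb{R}^{n}}|f(x)|^{p}\mathcal{W}_n^{\alpha}(x)\,dm_S(x).
\]
Since $\log|f|$ is $\Delta_S$-subharmonic, so is $p\log|f| + \alpha\log\mathcal{W}_n \cdot (\text{something})$; more precisely, one checks that $\log\big(|f|^{p}\mathcal{W}_n^{\alpha}\big)$ fails to be subharmonic (because $\Delta_S\log\mathcal{W}_n = -\alpha$), but $\log|f|^p$ alone is, and the submean-value inequality applied to $\log|f|^{p}$ at $x_0=0$ over spheres $r\zeta$ gives, after exponentiating via Jensen (using $p\ge 1$ to pass from $\log|f|^p$ to $|f|^p$ and convexity of $\exp$), a pointwise radial bound. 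Multiplying by $\mathcal{W}_n^{\alpha}$ and integrating against the weight $dm_S$ over all radii (with the normalization constant $c(\alpha)$ absorbing the radial integral of the weight) should produce the claim at the origin; the condition $\limsup_{|x|\to\infty}|f(x)|^p\mathcal{W}_n^\alpha(x)<\infty$ is what guarantees the integral comparisons are legitimate and no mass escapes.

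For a general $x_0 \in \mathbb{R}^{n}$, I would apply the case $x_0 = 0$ to the transplanted function $g = f\circ\varphi_{x_0}$. The point is that $|g|$ is again $\Delta_S$-log-subharmonic, because $\varphi_{x_0}$ is induced by an isometry of $\mathbb{S}^n$ and $\Delta_S$ is invariant under such isometries (Subsection on invariance of the orthogonal map and the Laplace--Beltrami operator). One then has
\[
	|f(x_0)|^{p}\mathcal{W}_n^{\alpha}(x_0) = |g(0)|^{p}\mathcal{W}_n^{\alpha}(x_0),
\]
and by the $x_0=0$ case applied to $g$ with the weight $\mathcal{W}_n^{\alpha}\circ\varphi_{x_0}$ in place of $\mathcal{W}_n^{\alpha}$ (this modified weight still solves $\Delta_S\log(\cdot) = -\alpha$ by invariance), plus the change-of-variables identity $|\det\varphi_{x_0}'(x)| = (1+|\varphi_{x_0}(x)|^2)^{n}/(1+|x|^2)^{n}$ proved in the Appendix — which is exactly the statement that $\varphi_{x_0}$ preserves $dm_S$ — the right-hand side transforms back into $\frac{1}{c(\alpha)}\int|f|^{p}\mathcal{W}_n^{\alpha}\,dm_S$. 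The constant $c(\alpha)$ is chosen precisely so that this normalization is preserved. Finally the case $x_0 = \infty$ follows by continuity, letting $|x_0|\to\infty$ and using the $\limsup$ hypothesis.

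The main obstacle I anticipate is handling the interaction between the subharmonicity of $\log|f|$ and the weight $\mathcal{W}_n^{\alpha}$, which is \emph{super}harmonic for $\Delta_S$: one cannot simply say $\log(|f|^p\mathcal{W}_n^\alpha)$ is subharmonic. The correct route is to keep the weight as a fixed radial multiplier and only use subharmonicity of $\log|f|^p$ itself, then integrate the resulting spherical-mean inequality against the radial density $r\mapsto \mathcal{W}_n^{\alpha}\cdot(\text{Jacobian factor})$, exactly as in the hyperbolic-ball Bergman-space proof. A secondary technical point is justifying the passage from $\log|f|^p$ to $|f|^p$: since $t\mapsto e^t$ is convex, Jensen gives $\exp\big(\int \log|f|^p\,d\sigma\big) \le \int |f|^p\,d\sigma$, so the submean-value inequality for $\log|f|^p$ upgrades to $|f(x_0)|^p \le \int|f(\varphi_{x_0}(r\zeta))|^p\,d\sigma_{n-1}(\zeta)$, i.e. $|f|^p$ is itself $\Delta_S$-subharmonic in the submean-value sense — and from there the weighted integration is routine.
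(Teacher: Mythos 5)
Your treatment of the origin case $x_0=0$ matches the paper's: $\Delta_S$-subharmonicity of $\log|f|$ (hence of $|f|$, and of $|f|^p$ via Jensen with $p\ge 1$) gives the spherical submean-value inequality, and multiplying by the \emph{radial} scalar $\mathcal{W}_n^\alpha(r)$ times the radial density and integrating yields the bound; the role of $c(\alpha)$ is exactly as you describe.

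The step at a general $x_0$ is where the proposal has a genuine gap. You set $g=f\circ\varphi_{x_0}$ and invoke ``the $x_0=0$ case with the weight $\mathcal{W}_n^\alpha\circ\varphi_{x_0}$ in place of $\mathcal{W}_n^\alpha$.'' But the $x_0=0$ argument you (and the paper) proved relies essentially on $\mathcal{W}_n^\alpha$ being \emph{radial}: one multiplies the spherical mean inequality by the constant-on-spheres factor $\mathcal{W}_n^\alpha(r)$ and integrates in $r$. The weight $\mathcal{W}_n^\alpha\circ\varphi_{x_0}$ is not radial, so the spherical mean $\int_{\mathbb{S}^{n-1}}|g(r\zeta)|^p(\mathcal{W}_n^\alpha\circ\varphi_{x_0})(r\zeta)\,d\sigma$ cannot be compared pointwise to $|g(0)|^p(\mathcal{W}_n^\alpha\circ\varphi_{x_0})(0)$ by the same device, and $\log\big(|g|^p\mathcal{W}_n^\alpha\circ\varphi_{x_0}\big)$ is \emph{not} subharmonic (its $\Delta_S$-Laplacian is $\ge-\alpha$, with the wrong sign). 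In fact, after the change of variables you describe, your proposed intermediate inequality is literally equivalent to the theorem at the point $x_0$ — so the argument as written is circular.

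The paper's fix, which you should adopt, is to push the weight ratio into the transplanted function rather than the measure: set
\[
	g(x)=f\big(\varphi_{x_0}(x)\big)\,\frac{\big(\mathcal{W}_n\circ\varphi_{x_0}\big)^{\alpha/p}(x)}{\mathcal{W}_n^{\alpha/p}(x)}.
\]
Since $\Delta_S\log\big(\mathcal{W}_n\circ\varphi_{x_0}\big)^{\alpha/p}=-\alpha/p=\Delta_S\log\mathcal{W}_n^{\alpha/p}$, the two weight terms cancel and $\log|g|$ is again $\Delta_S$-subharmonic; moreover $\mathcal{W}_n(0)=1$ gives $|g(0)|^p=|f(x_0)|^p\mathcal{W}_n^\alpha(x_0)$. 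Now apply the genuinely radial $x_0=0$ case to $g$ (with the original weight $\mathcal{W}_n^\alpha$) and change variables using $|\det\varphi_{x_0}'|=(1+|\varphi_{x_0}(x)|^2)^n/(1+|x|^2)^n$ to recover $\frac{1}{c(\alpha)}\int|f|^p\mathcal{W}_n^\alpha\,dm_S$. One also needs the removable-singularity observation at $x=-x_0/|x_0|^2$ (where $\varphi_{x_0}$ is undefined) to see that $\log|g|$ extends subharmonically. Finally, the paper first treats the case $\limsup_{|x|\to\infty}|f|^p\mathcal{W}_n^\alpha=0$ and then obtains the general case $\limsup<\infty$ by applying the result with $\alpha+\epsilon$ (which forces the limsup to $0$) and letting $\epsilon\to 0^+$ via monotone convergence; your proposal should make this two-step reduction explicit rather than just gesturing at ``continuity.''
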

\begin{proof}
	Let us first consider the case when \(\limsup_{|x| \to \infty} |f(x)|^{p}\mathcal{W}_n^{\alpha}(x)=0.\)
	Notice that for \(x_0 = \infty\) the inequality holds trivially.
	Let us prove the theorem when  \(x_0 =0.\) Since \( \log |f|\)  is \(\Delta_S\)-subharmonic on \(\mathbb{R}^{n}\) it is, also, \(|f|\)  \(\Delta_S\)-subharmonic on the same set (mean-value theorem and Jensen inequality), hence  we have
	\[
		|f(0)|^{p} \leq  \left(\int_{\mathbb{S}^{n-1}}^{}\left| f(r \zeta) \right| \, d \sigma(\zeta) \right)^{p} \leq   \int_{\mathbb{S}^{n-1}}^{}\left| f(r \zeta) \right| ^{p}\, d\sigma(\zeta),  \quad r \in (0,\infty).
	\]
	Multiplying both sides with \(\mathcal{W}_n^{\alpha}(r)\frac{2^{n}\omega_n r^{n-1} }{\left( 1+r^2 \right) ^{n}c(\alpha)},\) and integrating for \(r \in [0,\infty) \) we obtain
	\[
		|f(0)|^{p}\leq \int_{\mathbb{R}^{n}}^{}\left| f(x) \right| ^{p}\mathcal{W}_n^{\alpha}(x)\, dm_S(x).
	\]
	Let \(x_0  \in \mathbb{R}\) and \(g(x)=f\left( \varphi_{x_0}(x) \right) \frac{\left( \mathcal{W}_n\circ \varphi_{x_0} \right)^{\alpha/p} (x)}{\mathcal{W}_n^{\alpha/p}(x)}.\)  The function \(\log |g|\) has the mean-value property. Indeed, if \(x \neq \frac{-x_0 }{|x|^2},\) then it follows from
	\[
		\Delta_S \log |g|=\Delta_S \left[ \log |f\circ \varphi_{x_0}|+ \frac{\alpha}{p}\log  \left( \mathcal{W}_n \circ \varphi_{x_0 } \right) - \frac{\alpha}{p}\log \left( \mathcal{W}_n  \right) \right] = \Delta_S \log |f \circ \varphi_{x_0}| \geq 0.
	\]
	When \(x=-\frac{x_0 }{|x|^2},\) we have \(\varphi_{x_0 }(x)=\infty,\) hence, \(g(x_0)=0,\) which establishes the mean-value property. Using the mean-value property of \(\log |g|\) and the same idea from the beginning of the proof, we obtain
	\begin{align*}
		\left| f\left( x_0 \right)  \right| ^{p}\mathcal{W}_n^{\alpha}(x_0 )=\left| g(0)  \right| ^{p}\leq \left\lVert g \right\rVert _{p,\alpha}^{p} & =  \frac{1}{c(\alpha)}\int_{\mathbb{R}^{n}}^{}\left| f\left( \varphi_{x_0} \right)  \right| ^{p} \mathcal{W}_n^{\alpha}\left( \varphi_{x_0 }(x) \right) \, \frac{2^{n}\, dx}{\left( 1+x^2 \right) ^{n}} \\
		                                                                                                                                              & = \frac{1}{c(\alpha)}\int_{\mathbb{R}^{n}}^{}\left| f\left( y \right)  \right| ^{p}\mathcal{W}_n^\alpha(y)\, \frac{2^{n}\,dy}{\left( 1+y^2 \right)^{n} },
	\end{align*}
	where the last formula follows from the change of coordinates with \(y = \varphi_{x_0}(x)\) and using the identity \(\left| \det \varphi_{x_0}'(x) \right| = \frac{\left( 1+\left| \varphi_{x_0}(x) \right| ^2 \right) ^{n}}{\left( 1+x^2 \right) ^{n}}.\)

	Let us now consider the case  \(\limsup_{|x| \to \infty} \left| f(x) \right| ^{p}\mathcal{W}_n^{\alpha}(x) < \infty.\) We have that for every \(\epsilon>0\) holds \(\lim_{|x| \to \infty} |f(x)|^{p}\mathcal{W}_n^{\alpha+\epsilon}(x)=0\)  and therefore
	\[
		\left| f(x_0 ) \right| ^{p}\mathcal{W}_n^{\alpha+\epsilon}(x_0 )\leq \frac{1}{c(\alpha+\epsilon)}\int_{\mathbb{R}^{n}}^{}\left| f(x) \right|^{p} \mathcal{W}_n^{\alpha+\epsilon}(x)\, dm_S(x).
	\]
	Since  \(\mathcal{W}_n(x)<1,\) for all \(x \in \mathbb{R}^{n}\) we have that \(|f(x)|^{p}\mathcal{W}_n^{\alpha+\epsilon}(x)\) increases as \(\epsilon \to 0.\) Using \(\lim_{\epsilon \to 0} c(\alpha+\epsilon)=c(\alpha)\) and the monotone convergence theorem we obtain the desired inequality by letting \(\epsilon\to 0+.\) Also, one can show that the function \(g\) is log-subharmonic even when \(\limsup_{|x| \to \infty} |f(x)|^{p}\mathcal{W}_n^{\alpha}\neq 0\) using the idea of removal an isolated bounded singularity of a subharmonic function (see \cite[Thm. 3.6.1]{ransford1995}).
\end{proof}
\textbf{Conflict of interest.} The authors declares that they have not conflict of interests.

\textbf{Acknowledgements.} The first author acknowledges partial financial support by Ministry of Scientific and Technological Development and Higher Education grant 1259115. The second author is partially supported by MPNTR grant 174017, Serbia.
\nocite{*}
\printbibliography
\end{document}